\newtheorem{theorem}{Theorem}[section]
\newtheorem{remark}{Remark}[section]
\newtheorem{example}{Example}[section]
\newtheorem{lemma}[theorem]{Lemma}
\newenvironment{proof}[1][Proof]{\textbf{#1.} }{\ \rule{0.5em}{0.5em}}
\begin{document}

\title{Systems described by Volterra type integral operators}
\author{Dorota Bors, Andrzej Skowron and Stanis\l aw Walczak\\{\small Faculty of Mathematics and Computer Science} \\{\small University of Lodz}\\{\small Banacha 22, 90-238 Lodz, Poland}\\{\small email: \{{bors, skowroa, stawal\}}@math.uni.lodz.pl}}
\date{}
\maketitle

\begin{abstract}
In the paper some sufficient condition for the nonlinear integral operator of
the Volterra type to be a diffeomorphism defined on the space of absolutely
continuous functions are formulated.\ The proof relies on consideration of the
linearized equation together with Palais-Smale condition, thus a combination
of topological and variational methods is used. The applications of the result
to the control systems with feedback and to the specific nonlinear Volterra
equation are presented.

\textit{Key words and phrases:} diffeomorphism, Volterra operators, continuous
dependence, nonlinear integral operators, robust systems.\newline\textit{2000
Mathematics Subject Classification}. Primary: 57R50, 45G15; Secondary: 47H30.

\end{abstract}

\section{Introduction}

Let $X$ be a Banach space and let $V:X\rightarrow X$ be some operator. An
operator $V$ is said to be a diffeomorphism, if $V(X)=X,$ there exists an
inverse operator $V^{-1}:X\rightarrow X,$ and the operators $V$ and $V^{-1}$
are Fr\'{e}chet differentiable at any point $x\in X.$

\noindent In the paper we shall consider the nonlinear integral operators of
the form
\begin{equation}
V\left(  x\right)  \left(  t\right)  =x\left(  t\right)  +\int\limits_{\alpha
}^{t}v\left(  t,\tau,x\left(  \tau\right)  \right)  d\tau\label{0}%
\end{equation}
where $t\in\left[  \alpha,\beta\right]  ,$ $v:P_{\Delta}\times\mathbb{R}%
^{n}\rightarrow\mathbb{R}^{n}$, $n\geq1$ and $P_{\Delta}=\left\{  \left(
t,\tau\right)  \in\left[  \alpha,\beta\right]  \times\left[  \alpha
,\beta\right]  :\tau\leq t\right\}  $ and $x\in AC_{0}^{2}.$ By $AC_{0}%
^{2}=AC_{0}^{2}\left(  \left[  \alpha,\beta\right]  ,\mathbb{R}^{n}\right)  $
we shall denote the space of absolutely continuous functions defined on the
interval $\left[  \alpha,\beta\right]  $ satisfying conditions $x\left(
\alpha\right)  =0$ and $x^{\prime}\in L^{2}=L^{2}\left(  \left[  \alpha
,\beta\right]  ,\mathbb{R}^{n}\right)  $ with the norm given by the formula%
\[
\left\Vert x\right\Vert _{AC_{0}^{2}}^{2}=\int\limits_{\alpha}^{\beta
}\left\vert x^{\prime}\left(  t\right)  \right\vert ^{2}dt.
\]
Under some appropriate assumptions, to be specified in the next sections,
imposed on the function $v$, it is feasible to formulate some sufficient
condition for the operator $V:AC_{0}^{2}\rightarrow AC_{0}^{2}$ to be\ a
diffeomorphism. One can rephrase, underlying the property of the continuous
dependence on parameters, the definition of being a diffeomorphism as the
operator $V:AC_{0}^{2}\rightarrow AC_{0}^{2}$ is a diffeomorphism if and only
if\newline(a) the operator $V$ is Fr\'{e}chet differentiable at every point
$x\in AC_{0}^{2}$ and for every $y\in AC_{0}^{2}$ there exists a unique
solution $x=x_{y}\in AC_{0}^{2}$ to the equation $V\left(  x\right)  =y$ and
the solution $x_{y}$ depends continuously on parameter $y\in AC_{0}^{2}%
,$\newline(b) the operator $AC_{0}^{2}\backepsilon y\rightarrow x_{y}\in
AC_{0}^{2}$ is Fr\'{e}chet differentiable.

\noindent The systems satisfying condition (a) are often referred to as stable
ones and well-posed. If, additionally, the condition (b) is guaranteed then
the system is called a robust one, cf. \cite{SanSzn}.

\noindent The theory of integral operators and integral equations forms an
extensive area of mathematics. The applications to physics, biology and
technical sciences are vast. In particular, Volterra equations find
applications in mechanics, demography and epidemiology, to mention only,
specific areas of: population dynamics, the spread of an epidemics, the study
of viscoelastic materials with memory, the cosmic ray transport of charged
particles in a turbulent plasma, the nuclear reactor dynamics with feedback or
the general control theory for the systems with feedback loops, cf.
\cite{AtaPil, Chr, Cor,CraNoh, GriLonSta, Lon1, Lon2, Pod1, Pod2, Pru,
RenHruNoh} and references therein.

The structure of the paper reads as follows. In current section the set-up of
notation and terminology is presented. Section 2 contains some regularity
assumptions imposed on the function $v$ and some auxiliary lemmas. In Section
3, we focus our attention on stating a sufficient condition for the functional
defined by the Volterra type operator to satisfy Palais-Smale condition. Some
sufficient condition for the nonlinear integral operator of the Volterra type
to be a diffeomorphism can be found in Section 4. Finally, the examples of
integral operators of the Volterra type satisfying some sufficient condition
for being a diffeomorphism are presented.

\section{Preliminaries}

In this section we impose fundamental assumptions on the function $v$ defining
the operator $V$. In what follows, we will need the following conditions:

\begin{enumerate}
\item[(A1)] (a) the function $v\left(  \cdot,\tau,\cdot\right)  $ is
continuous on the set $G:=[\alpha,\beta]\times\mathbb{R}^{n}$ for a.e.
$\tau\in\lbrack\alpha,\beta],$\newline\ \ \ \ \ \ (b) there exists
$v_{t}\left(  \cdot,\tau,\cdot\right)  $ and it is continuous on $G$ for a.e.
$\tau\in\lbrack\alpha,\beta],$\newline\ \ \ \ \ \ (c) there exists
$v_{x}\left(  \cdot,\tau,\cdot\right)  $ and it is continuous on set $G$ for
a.e. $\tau\in\lbrack\alpha,\beta],$\newline\ \ \ \ \ \ (d) there exists
$v_{tx}\left(  \cdot,\tau,\cdot\right)  $ and it is continuous on set $G$ for
a.e. $\tau\in\lbrack\alpha,\beta];$

\item[(A2)] (a) the function $v\left(  t,\tau,x\right)  $ is measurable with
respect to $\tau$ and locally bounded with respect to $x,$ i.e. for every
$\rho>0$ there exist $l_{\rho}>0$ such that for $\left(  t,\tau\right)  \in
P_{\Delta}$ and $x\in B_{\rho}=\left\{  x\in\mathbb{R}^{n}:\left\vert
x\right\vert \leq\rho\right\}  $ we have $\left\vert v\left(  t,\tau,x\right)
\right\vert \leq l_{\rho},$\newline\ \ \ \ \ \ (b) the function $v_{t}\left(
t,\tau,x\right)  $ is measurable with respect to $\tau$ and locally bounded
with respect to $x,$\newline\ \ \ \ \ \ (c) the function $v_{x}\left(
t,\tau,x\right)  $ is measurable with respect to $\tau$ and locally bounded
with respect to $x,$\newline\ \ \ \ \ \ (d) the function $v_{tx}\left(
t,\tau,x\right)  $ is measurable with respect to $\tau$ and locally bounded
with respect to $x.$
\end{enumerate}

Before we formulate the main result of the paper we proceed with some
auxiliary lemmas.

\begin{lemma}
\label{lemat2.1}If the function $v$ satisfies $\left(  A1a\right)
,(A1b),(A2a)$ and $(A2b)$, then the operator $V:AC_{0}^{2}\rightarrow
AC_{0}^{2}$ is well-defined by $\left(  \ref{0}\right)  .$
\end{lemma}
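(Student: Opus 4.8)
The plan is to fix an arbitrary $x\in AC_{0}^{2}$ and verify the three requirements defining membership in $AC_{0}^{2}$ for the function $y:=V(x)$: that $y$ is well-defined and absolutely continuous, that $y(\alpha)=0$, and that $y^{\prime}\in L^{2}$. Write $y=x+W$ where $W(t)=\int_{\alpha}^{t}v(t,\tau,x(\tau))\,d\tau$; since $x$ itself already lies in $AC_{0}^{2}$ by hypothesis, it suffices to prove $W\in AC_{0}^{2}$.

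First I would settle well-definedness of the integral defining $W$. Because $x$ is absolutely continuous on the compact interval $[\alpha,\beta]$ it is continuous and hence bounded, say $\left\vert x(\tau)\right\vert \leq\rho$ for all $\tau$. For fixed $t$ the map $\tau\mapsto v(t,\tau,x(\tau))$ is a superposition of the Carath\'eodory function $(\tau,\xi)\mapsto v(t,\tau,\xi)$ --- measurable in $\tau$ by $(A2a)$ and continuous in $\xi$ by $(A1a)$ --- with the measurable function $x(\cdot)$, hence measurable; and by the local bound in $(A2a)$ it is dominated by $l_{\rho}$, so it is integrable on $[\alpha,t]$. Thus $W(t)$ is well-defined for every $t$, and $W(\alpha)=0$ trivially.

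The core of the argument is to show $W$ is Lipschitz, which at once yields absolute continuity and $W^{\prime}\in L^{\infty}\subset L^{2}([\alpha,\beta])$. For $t,t+h\in\lbrack\alpha,\beta]$ I would split
\[
W(t+h)-W(t)=\int_{t}^{t+h}v(t+h,\tau,x(\tau))\,d\tau+\int_{\alpha}^{t}\bigl[v(t+h,\tau,x(\tau))-v(t,\tau,x(\tau))\bigr]\,d\tau.
\]
The first integral is bounded in norm by $l_{\rho}\left\vert h\right\vert $ using the local bound of $(A2a)$. For the second, the existence and local boundedness of $v_{t}$ furnished by $(A1b)$ and $(A2b)$ let me apply the mean value theorem in the $t$-variable for a.e. $\tau$, bounding the integrand by $m_{\rho}\left\vert h\right\vert $, where $m_{\rho}$ is the bound from $(A2b)$ on $B_{\rho}$; the integral is then at most $(\beta-\alpha)m_{\rho}\left\vert h\right\vert $. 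Hence $\left\vert W(t+h)-W(t)\right\vert \leq(l_{\rho}+(\beta-\alpha)m_{\rho})\left\vert h\right\vert $, so $W$ is Lipschitz, $W\in AC_{0}^{2}$, and therefore $V(x)=x+W\in AC_{0}^{2}$.

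The main obstacle I anticipate is not the Lipschitz estimate itself but the two measurability points underpinning it: justifying that $\tau\mapsto v(t,\tau,x(\tau))$ is measurable via the Carath\'eodory superposition principle, and ensuring the pointwise (in $\tau$) mean value estimate holds on a set of full measure so that the dominated bound integrates correctly. Both rely on carefully combining the ``for a.e. $\tau$'' continuity statements in $(A1)$ with the measurability statements in $(A2)$; once these are in place, the remaining estimates are routine.
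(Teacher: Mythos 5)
Your proof is correct and follows essentially the same route as the paper: the identical splitting of the increment of $W$ into the piece $\int_{t}^{t+h}v(t+h,\tau,x(\tau))\,d\tau$ and the piece $\int_{\alpha}^{t}\bigl[v(t+h,\tau,x(\tau))-v(t,\tau,x(\tau))\bigr]\,d\tau$, with the local bounds of (A2a) and (A2b) (the latter via the mean value theorem in $t$) producing a Lipschitz-type estimate, which is exactly the estimate the paper sums over a partition to get absolute continuity. The only divergence is the final step: the paper, having absolute continuity, writes the derivative explicitly as $u^{\prime}(t)=v(t,t,x_{0}(t))+\int_{\alpha}^{t}v_{t}(t,\tau,x_{0}(\tau))\,d\tau$ and bounds its $L^{2}$ norm separately, whereas you obtain $W^{\prime}\in L^{\infty}\subset L^{2}$ for free from the Lipschitz constant --- a slightly cleaner finish --- and your explicit Carath\'eodory superposition argument for the measurability of $\tau\mapsto v(t,\tau,x(\tau))$ makes rigorous a point the paper leaves tacit.
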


\begin{proof}
Let $x_{0}\in AC_{0}^{2}$. It is enough to demonstrate that the function $u$
defined by formula%
\[
u\left(  t\right)  =\int_{\alpha}^{t}v\left(  t,\tau,x_{0}\left(  \tau\right)
\right)  d\tau
\]
is absolutely continuous and the derivative of $u$ is square-integrable$.$
This can be seen by observing that%
\begin{align*}
&  \sum\limits_{i=1}^{N}\left\vert u\left(  t_{i+1}\right)  -u\left(
t_{i}\right)  \right\vert =\sum\limits_{i=1}^{N}\left\vert \int_{\alpha
}^{t_{i}}\left[  v\left(  t_{i+1},\tau,x_{0}\left(  \tau\right)  \right)
-v\left(  t_{i},\tau,x_{0}\left(  \tau\right)  \right)  \right]  d\tau
+\int_{t_{i}}^{t_{i+1}}v\left(  t_{i+1},\tau,x_{0}\left(  \tau\right)
\right)  d\tau\right\vert \\
&  \leq\int_{\alpha}^{\beta}l_{\rho}d\tau\sum\limits_{i=1}^{N}\left\vert
t_{i+1}-t_{i}\right\vert +\sum\limits_{i=1}^{N}\int_{t_{i}}^{t_{i+1}}l_{\rho
}d\tau\leq l_{\rho}\left(  \beta-\alpha+1\right)  \sum\limits_{i=1}%
^{N}\left\vert t_{i+1}-t_{i}\right\vert
\end{align*}
where $\alpha\leq t_{1}<t_{2}<...<t_{i}<t_{i+1}<...<t_{N}<t_{N+1}\leq\beta.$
Consequently, for any $x_{0}\in AC_{0}^{2}$ the function $u$ is absolutely
continuous and therefore, for almost any $t\in\left(  \alpha,\beta\right)  ,$
there exists the derivative of $u$ and
\[
\int\limits_{\alpha}^{\beta}\left\vert u^{\prime}\left(  t\right)  \right\vert
^{2}dt\leq2\int\limits_{\alpha}^{\beta}\left\vert v\left(  t,t,x_{0}\left(
t\right)  \right)  \right\vert ^{2}dt+2\int\limits_{\alpha}^{\beta}\left(
\int\limits_{\alpha}^{t}\left\vert v_{t}\left(  t,\tau,x_{0}\left(
\tau\right)  \right)  \right\vert d\tau\right)  ^{2}dt.
\]
By assumptions $\left(  A2a\right)  $ and $\left(  A2b\right)  ,$ the
derivative of $u$ belongs to $L^{2}.$
\end{proof}

\begin{lemma}
For any $x\in AC_{0}^{2}$ we have%
\begin{align}
\left\vert x\left(  t\right)  \right\vert  &  \leq\left(  t-\alpha\right)
^{\frac{1}{2}}\left\Vert x\right\Vert _{AC_{0}^{2}}\text{ for }t\in\left[
\alpha,\beta\right]  ,\text{ }\label{((2.1))}\\
\int\limits_{\alpha}^{\beta}\left\vert x\left(  t\right)  \right\vert ^{2}dt
&  \leq\frac{1}{2}\left(  \beta-\alpha\right)  ^{2}\left\Vert x\right\Vert
_{AC_{0}^{2}}^{2}.\nonumber
\end{align}

\end{lemma}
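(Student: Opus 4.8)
The plan is to exploit the representation of $x$ as the integral of its own derivative and then invoke the Cauchy--Schwarz inequality. Since every $x\in AC_{0}^{2}$ is absolutely continuous with $x(\alpha)=0$, the fundamental theorem of calculus gives the representation $x(t)=\int_{\alpha}^{t}x^{\prime}(s)\,ds$ for each $t\in[\alpha,\beta]$; this identity, which is precisely the content of absolute continuity together with the boundary condition, is the only point that needs to be stated carefully before the estimates become routine.

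First I would establish the pointwise bound $\left(\ref{((2.1))}\right)$. Starting from the representation above, I would pass the modulus inside the integral and apply Cauchy--Schwarz to the product of $1$ and $\left\vert x^{\prime}\right\vert$ on the interval $[\alpha,t]$:
\[
\left\vert x(t)\right\vert \leq\int_{\alpha}^{t}\left\vert x^{\prime}(s)\right\vert \,ds\leq(t-\alpha)^{1/2}\left(\int_{\alpha}^{t}\left\vert x^{\prime}(s)\right\vert ^{2}\,ds\right)^{1/2}.
\]
Bounding the last integral from above by the integral over all of $[\alpha,\beta]$, which by definition equals $\left\Vert x\right\Vert _{AC_{0}^{2}}^{2}$, yields the first asserted inequality.

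For the second inequality I would simply square the pointwise estimate just obtained and integrate in $t$ over $[\alpha,\beta]$. This gives
\[
\int_{\alpha}^{\beta}\left\vert x(t)\right\vert ^{2}\,dt\leq\left\Vert x\right\Vert _{AC_{0}^{2}}^{2}\int_{\alpha}^{\beta}(t-\alpha)\,dt=\frac{1}{2}(\beta-\alpha)^{2}\left\Vert x\right\Vert _{AC_{0}^{2}}^{2},
\]
where the elementary computation $\int_{\alpha}^{\beta}(t-\alpha)\,dt=(\beta-\alpha)^{2}/2$ produces exactly the claimed constant.

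I do not expect any substantive obstacle here: the statement is a Poincar\'{e}-type (Sobolev embedding) estimate tailored to the norm of $AC_{0}^{2}$, and both parts follow from Cauchy--Schwarz applied to the derivative. The only genuine care is in justifying the integral representation of $x$ from absolute continuity and the condition $x(\alpha)=0$; everything after that is a direct computation.
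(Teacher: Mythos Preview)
Your proof is correct and follows essentially the same approach as the paper: write $x(t)=\int_{\alpha}^{t}x'(\tau)\,d\tau$, apply the Schwarz inequality to get the pointwise bound, then square and integrate to obtain the $L^{2}$ estimate. The paper's argument is the same, only more terse.
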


\begin{proof}
Immediately from the Schwarz inequality, for any $t\in\left[  \alpha
,\beta\right]  ,$ we get%
\[
\left\vert x\left(  t\right)  \right\vert \leq\int\limits_{\alpha}%
^{t}\left\vert x^{\prime}\left(  \tau\right)  \right\vert d\tau\leq\left(
t-\alpha\right)  ^{\frac{1}{2}}\left\Vert x\right\Vert _{AC_{0}^{2}}%
\]
and subsequently
\[
\int\limits_{\alpha}^{\beta}\left\vert x\left(  t\right)  \right\vert
^{2}dt\leq\left\Vert x\right\Vert _{AC_{0}^{2}}^{2}\int\limits_{\alpha}%
^{\beta}\left(  t-\alpha\right)  dt=\frac{1}{2}\left(  \beta-\alpha\right)
^{2}\left\Vert x\right\Vert _{AC_{0}^{2}}^{2},
\]
and this is precisely the assertion of the lemma.
\end{proof}

For the operator $V:AC_{0}^{2}\rightarrow AC_{0}^{2}$ defined by $\left(
\ref{0}\right)  $ we have the following differentiability result:

\begin{lemma}
\label{lemat2.2}The operator $V$\ defined by $\left(  \ref{0}\right)  $ is
Fr\'{e}chet differentiable at every point $x_{0}\in AC_{0}^{2}$ and for any
$t\in\left[  \alpha,\beta\right]  $ we have%
\begin{equation}
V^{\prime}\left(  x_{0}\right)  h\left(  t\right)  =h\left(  t\right)
+\int\limits_{\alpha}^{t}v_{x}\left(  t,\tau,x_{0}\left(  \tau\right)
\right)  h\left(  \tau\right)  d\tau\label{(2.2)}%
\end{equation}
provided that the function $v$ satisfies $\left(  A1a\right)  ,$ $\left(
A1c\right)  ,$ $\left(  A2a\right)  $ and $\left(  A2c\right)  .$
\end{lemma}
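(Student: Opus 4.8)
The plan is to show Fréchet differentiability directly from the definition: I must exhibit a bounded linear operator $L:AC_0^2\rightarrow AC_0^2$ (the candidate being the right-hand side of $\left(\ref{(2.2)}\right)$) such that $\left\Vert V(x_0+h)-V(x_0)-Lh\right\Vert_{AC_0^2}=o\left(\left\Vert h\right\Vert_{AC_0^2}\right)$ as $\left\Vert h\right\Vert_{AC_0^2}\rightarrow 0$. First I would verify that $L$ is well-defined and bounded: the linear operator $h\mapsto h+\int_\alpha^t v_x(t,\tau,x_0(\tau))h(\tau)\,d\tau$ maps $AC_0^2$ into $AC_0^2$ by the same argument as in Lemma \ref{lemat2.1} (with $v$ replaced by the function $(t,\tau)\mapsto v_x(t,\tau,x_0(\tau))h(\tau)$, whose boundedness follows from (A2c) applied on a ball containing the range of $x_0$, which is compact by the pointwise bound $\left(\ref{((2.1))}\right)$); boundedness of $L$ then follows from estimating $\left\Vert Lh\right\Vert_{AC_0^2}$ by a constant times $\left\Vert h\right\Vert_{AC_0^2}$ using the square-integrability estimate from Lemma \ref{lemat2.1}.

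Next I would form the remainder. Writing $R(h)(t)=V(x_0+h)(t)-V(x_0)(t)-Lh(t)$, the identity part $h(t)$ cancels, leaving
\[
R(h)(t)=\int_\alpha^t\left[v(t,\tau,x_0(\tau)+h(\tau))-v(t,\tau,x_0(\tau))-v_x(t,\tau,x_0(\tau))h(\tau)\right]d\tau.
\]
The key is to control the $AC_0^2$ norm of $R(h)$, i.e. $\int_\alpha^\beta\left\vert R(h)'(t)\right\vert^2dt$. I would apply the fundamental theorem of calculus to the bracketed integrand as a function of its first argument $t$, as was done in Lemma \ref{lemat2.1}, so that $R(h)'(t)$ splits into a diagonal term (the integrand evaluated at $\tau=t$) plus an integral of the $t$-derivative of the integrand, now involving $v_{tx}$ — this is where assumptions (A1c) and (A2c) and the continuity of the derivatives enter.

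For each of these two pieces I would invoke the mean value theorem in the $x$-variable to write, for the integrand $g(t,\tau,\cdot):=v(t,\tau,\cdot)$, the difference $g(t,\tau,x_0+h)-g(t,\tau,x_0)-g_x(t,\tau,x_0)h$ as $\left[g_x(t,\tau,x_0+\theta h)-g_x(t,\tau,x_0)\right]h$ for some $\theta\in(0,1)$. The pointwise bound $\left(\ref{((2.1))}\right)$ guarantees that as $\left\Vert h\right\Vert_{AC_0^2}\rightarrow 0$ the perturbed argument stays in a fixed compact ball, and on that ball the continuity of $v_x$ from (A1c) gives uniform continuity, so the bracket tends uniformly to zero. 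Combining this uniform smallness with the $L^2$-bound on $h$ supplied by Lemma \ref{lemat2.1} yields $\left\Vert R(h)\right\Vert_{AC_0^2}=o\left(\left\Vert h\right\Vert_{AC_0^2}\right)$, which is the required estimate.

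The main obstacle I anticipate is the diagonal/boundary term produced when differentiating the integral in $t$: the contribution of the integrand evaluated at $\tau=t$ must also be shown to be of order $o\left(\left\Vert h\right\Vert_{AC_0^2}\right)$ in $L^2$, and this requires care because it is not integrated against the smoothing $d\tau$. Handling it cleanly means controlling $\int_\alpha^\beta\left\vert v_x(t,t,x_0(t)+\theta h(t))-v_x(t,t,x_0(t))\right\vert^2\left\vert h(t)\right\vert^2dt$ by uniform continuity of $v_x$ together with the pointwise estimate $\left(\ref{((2.1))}\right)$ bounding $\left\vert h(t)\right\vert$; the interchange of differentiation and integration and the measurability of the diagonal restriction $\tau\mapsto v_x(t,t,\cdot)$ are the technical points demanding the full strength of the hypotheses (A1c), (A2c) and the differentiability assumptions on $v_t$.
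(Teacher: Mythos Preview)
Your route differs from the paper's. The paper never differentiates the remainder in $t$; instead it writes
\[
V^{0}(x_{0}+h)(t)-V^{0}(x_{0})(t)-\int_{\alpha}^{t}v_{x}(t,\tau,x_{0}(\tau))h(\tau)\,d\tau
=\int_{\alpha}^{t}\left[\int_{0}^{1}v_{x}(t,\tau,x_{0}(\tau)+\theta h(\tau))\,d\theta-v_{x}(t,\tau,x_{0}(\tau))\right]h(\tau)\,d\tau,
\]
pulls out $\sup_{\tau}|h(\tau)|\le(\beta-\alpha)^{1/2}\Vert h\Vert_{AC_{0}^{2}}$ via $\left(\ref{((2.1))}\right)$, and then applies the Lebesgue theorem to the remaining double integral to conclude $o(h)/\Vert h\Vert_{AC_{0}^{2}}\to 0$. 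No diagonal term and no $v_{tx}$ ever appear; the argument stays at the level of the values $R(h)(t)$ rather than the $AC_{0}^{2}$ norm $\Vert R(h)'\Vert_{L^{2}}$.

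Your plan is in fact more honest about what Fr\'{e}chet differentiability in $AC_{0}^{2}$ demands, since the norm there is $\Vert\cdot'\Vert_{L^{2}}$ and one must control $R(h)'$, not just $R(h)$. The price is that the steps you outline---differentiating $R(h)$ in $t$ and estimating the resulting $v_{t}$ and $v_{tx}$ terms---require (A1b), (A1d), (A2b), (A2d), none of which is listed among the hypotheses of Lemma~\ref{lemat2.2}. So either you must argue that the stated hypotheses (A1a), (A1c), (A2a), (A2c) suffice by some other mechanism (they don't, by the same reasoning that Lemma~\ref{lemat2.1} already needs (A1b), (A2b) just for $V$ to land in $AC_{0}^{2}$), or you should flag that the lemma as stated is under-hypothesized and that your proof, like the well-definedness of $V$ and $V'$ themselves, tacitly uses the full set (A1)--(A2). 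The paper's own proof sidesteps this by estimating only pointwise values, which strictly speaking does not close the $AC_{0}^{2}$ gap; your approach does close it, at the cost of invoking the extra regularity that the rest of the paper assumes anyway.
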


\begin{proof}
In view of the definition of $V$, it suffices to show that the operator
\[
V^{0}\left(  x\right)  \left(  t\right)  =\int\limits_{\alpha}^{t}v\left(
t,\tau,x\left(  \tau\right)  \right)  d\tau
\]
is Fr\'{e}chet differentiable. For any $t\in\left[  \alpha,\beta\right]  ,$
any $h\in AC_{0}^{2}$ and some $\theta\in\left[  0,1\right]  $, the
application of the Mean Value Theorem (cf. \cite{IofTih}) enables us to write%
\begin{align*}
V^{0}\left(  x_{0}+h\right)  \left(  t\right)  -V^{0}\left(  x_{0}\right)
\left(  t\right)   &  =\int\limits_{\alpha}^{t}\left[  v\left(  t,\tau
,x_{0}\left(  \tau\right)  +h\left(  \tau\right)  \right)  -v\left(
t,\tau,x_{0}\left(  \tau\right)  \right)  \right]  d\tau\\
&  =\int\limits_{\alpha}^{t}v_{x}\left(  t,\tau,x_{0}\left(  \tau\right)
\right)  h\left(  \tau\right)  d\tau\\
&  +\int\limits_{\alpha}^{t}\left[  \int\limits_{0}^{1}v_{x}\left(
t,\tau,x_{0}\left(  \tau\right)  +\theta h\left(  \tau\right)  \right)
d\theta-v_{x}\left(  t,\tau,x_{0}\left(  \tau\right)  \right)  \right]
h\left(  \tau\right)  d\tau.
\end{align*}
We see from $\left(  \ref{((2.1))}\right)  $ that%
\begin{align*}
&  \left\vert \int\limits_{\alpha}^{t}\left[  \int\limits_{0}^{1}v_{x}\left(
t,\tau,x_{0}\left(  \tau\right)  +\theta h\left(  \tau\right)  \right)
d\theta-v_{x}\left(  t,\tau,x_{0}\left(  \tau\right)  \right)  \right]
h\left(  \tau\right)  d\tau\right\vert \\
&  \leq\int\limits_{\alpha}^{\beta}\int\limits_{0}^{1}\left\vert v_{x}\left(
t,\tau,x_{0}\left(  \tau\right)  +\theta h\left(  \tau\right)  \right)
-v_{x}\left(  t,\tau,x_{0}\left(  \tau\right)  \right)  \right\vert d\theta
d\tau\left(  \beta-\alpha\right)  ^{\frac{1}{2}}\left\Vert h\right\Vert
_{AC_{0}^{2}}.
\end{align*}
It should be noted that the strong convergence in $AC_{0}^{2}$ implies the
uniform convergence in $C.$ Then, by the assumptions of the lemma, it is
enough to apply the Lebesgue Theorem to obtain, if $\left\Vert h\right\Vert
_{AC_{0}^{2}}\rightarrow0,$ the following convergence
\[
\int\limits_{\alpha}^{\beta}\left\vert v_{x}\left(  t,\tau,x_{0}\left(
\tau\right)  +\theta h\left(  \tau\right)  \right)  -v_{x}\left(  t,\tau
,x_{0}\left(  \tau\right)  \right)  \right\vert d\tau\rightarrow0
\]
and therefore
\[
V^{0}\left(  x_{0}+h\right)  \left(  t\right)  -V^{0}\left(  x_{0}\right)
\left(  t\right)  =\int\limits_{\alpha}^{t}v_{x}\left(  t,\tau,x_{0}\left(
\tau\right)  \right)  h\left(  \tau\right)  d\tau+o\left(  h\right)
\]
where $o\left(  h\right)  /\left\Vert h\right\Vert _{AC_{0}^{2}}\rightarrow0$
as $\left\Vert h\right\Vert _{AC_{0}^{2}}\rightarrow0.$ This finishes the proof.
\end{proof}

The rest of this section is devoted to a close study on the linear operator
derived from Fr\'{e}chet differential of the operator $V$ defined by $\left(
\ref{0}\right)  .$ Actually, let $x_{0}\in AC_{0}^{2}$ be a fixed but an
arbitrary function and $T:AC_{0}^{2}\rightarrow AC_{0}^{2}$ be a linear
operator defined, for any $g\in AC_{0}^{2}$ and any $t\in\left[  \alpha
,\beta\right]  ,$ by%
\begin{equation}
\left(  Tg\right)  \left(  t\right)  =\int\limits_{\alpha}^{t}v_{x}\left(
t,\tau,x_{0}\left(  \tau\right)  \right)  g\left(  \tau\right)  d\tau\text{.}
\label{(2.3)}%
\end{equation}
Due to the restrictions imposed on $v$ it is plausible to consider, for any
$k\in\mathbb{N}_{0}$, $t\in\left[  \alpha,\beta\right]  $ and $g\in AC_{0}%
^{2}$, the following sequence of iterations%
\begin{equation}
\left(  T^{k+1}g\right)  \left(  t\right)  =T\left(  T^{k}g\right)  \left(
t\right)  =\int\limits_{\alpha}^{t}v_{x}\left(  t,\tau,x_{0}\left(
\tau\right)  \right)  \left(  T^{k}g\right)  \left(  \tau\right)
d\tau\label{(2.4)}%
\end{equation}
with the first term defined as%
\begin{equation}
\left(  T^{0}g\right)  \left(  t\right)  =g\left(  t\right)  . \label{(2.5)}%
\end{equation}
Now we turn to estimating the above sequence $\left\{  T^{k}g\right\}  $.
Namely, we prove the following lemma.

\begin{lemma}
\label{lemat 2.3}Let $v$ satisfy $\left(  A1c\right)  $ and $\left(
A2c\right)  .$ Then for $k\in\mathbb{N}_{0},$ $t\in\left[  \alpha
,\beta\right]  $%
\begin{equation}
\left\vert \left(  T^{k}g\right)  \left(  t\right)  \right\vert \leq
\frac{\left(  t-\alpha\right)  ^{k}}{k!}l_{\rho}^{k}M \label{(2.6)}%
\end{equation}
where $l_{\rho}>0$ is a constant defined by $\left(  A2c\right)  $, $M$ is a
constant such that $M=\sup_{t\in\left[  \alpha,\beta\right]  }\left\vert
g\left(  t\right)  \right\vert $ and $\left\{  T^{k}g\right\}  $ is a sequence
defined by $\left(  \ref{(2.4)}\right)  $ and $\left(  \ref{(2.5)}\right)  .$
\end{lemma}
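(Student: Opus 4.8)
The plan is to prove the estimate by induction on $k$, with the preparatory work being to secure a single uniform constant $l_{\rho}$ that controls the kernel $v_{x}\bigl(t,\tau,x_{0}(\tau)\bigr)$ along the fixed function $x_{0}$. First I would observe that, since $x_{0}\in AC_{0}^{2}$, the estimate $(\ref{((2.1))})$ gives $\left\vert x_{0}(\tau)\right\vert \leq(\beta-\alpha)^{1/2}\left\Vert x_{0}\right\Vert _{AC_{0}^{2}}=:\rho$ for all $\tau\in[\alpha,\beta]$, so that the trajectory of $x_{0}$ lies in the ball $B_{\rho}$. Assumption $(A2c)$ then furnishes a constant $l_{\rho}>0$ with $\left\vert v_{x}\bigl(t,\tau,x_{0}(\tau)\bigr)\right\vert \leq l_{\rho}$ for all $(t,\tau)\in P_{\Delta}$. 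I would also note that $M=\sup_{t}\left\vert g(t)\right\vert$ is finite because $g\in AC_{0}^{2}$ is in particular continuous on the compact interval $[\alpha,\beta]$. This is the only point requiring any care: the hypotheses only provide \emph{local} boundedness of $v_{x}$, and one must invoke the a~priori bound on $x_{0}$ to convert it into the uniform bound $l_{\rho}$ that drives the whole iteration.

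With these constants in hand, the induction is routine. For the base case $k=0$, the definition $(\ref{(2.5)})$ gives $\left\vert (T^{0}g)(t)\right\vert =\left\vert g(t)\right\vert \leq M$, which matches the right-hand side of $(\ref{(2.6)})$ since $(t-\alpha)^{0}/0!=1$ and $l_{\rho}^{0}=1$. For the inductive step, assuming the bound holds for $k$, I would use the recursion $(\ref{(2.4)})$, pull the absolute value inside the integral, and apply both the kernel bound and the inductive hypothesis:
\[
\left\vert (T^{k+1}g)(t)\right\vert \leq\int_{\alpha}^{t}\left\vert v_{x}\bigl(t,\tau,x_{0}(\tau)\bigr)\right\vert \,\left\vert (T^{k}g)(\tau)\right\vert \,d\tau\leq\int_{\alpha}^{t}l_{\rho}\cdot\frac{(\tau-\alpha)^{k}}{k!}l_{\rho}^{k}M\,d\tau .
\]
Evaluating the elementary integral $\int_{\alpha}^{t}(\tau-\alpha)^{k}\,d\tau=(t-\alpha)^{k+1}/(k+1)$ and collecting constants yields exactly
\[
\left\vert (T^{k+1}g)(t)\right\vert \leq\frac{(t-\alpha)^{k+1}}{(k+1)!}l_{\rho}^{k+1}M ,
\]
which closes the induction and establishes $(\ref{(2.6)})$ for every $k\in\mathbb{N}_{0}$.

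I do not expect any genuine obstacle here; the argument is a direct Volterra-type factorial estimate, and its role in the paper is clearly to prepare a majorant $\sum_{k}(\beta-\alpha)^{k}l_{\rho}^{k}/k!=\exp\bigl((\beta-\alpha)l_{\rho}\bigr)$ so that the Neumann-type series $\sum_{k}(-1)^{k}T^{k}g$ converges and the linearized operator $V^{\prime}(x_{0})=I+T$ can be inverted. The single point worth flagging in the write-up is making explicit that $l_{\rho}$ depends only on $x_{0}$ (through $\rho$) and not on $k$, since it is this uniformity that makes the factorial in the denominator effective.
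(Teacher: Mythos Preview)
Your proof is correct and follows essentially the same approach as the paper: the paper verifies the estimate explicitly for $k=1,2,3$ and then appeals to induction, while you carry out the induction cleanly from the base case $k=0$ and the general step $k\mapsto k+1$. Your additional remarks justifying that $x_{0}$ stays in $B_{\rho}$ (so that $(A2c)$ yields a single $l_{\rho}$) and that $M<\infty$ are not spelled out in the paper but are correct and worth including.
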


\begin{proof}
The estimate in $\left(  \ref{(2.6)}\right)  $ can be seen by observing that
from $\left(  \ref{(2.3)}\right)  -\left(  \ref{(2.5)}\right)  $ and the
assumptions of the lemma$,$ it follows that
\[
\left\vert \left(  T^{1}g\right)  \left(  t\right)  \right\vert =\left\vert
\int\limits_{\alpha}^{t}v_{x}\left(  t,\tau,x_{0}\left(  \tau\right)  \right)
g\left(  \tau\right)  d\tau\right\vert \leq\left(  t-\alpha\right)  l_{\rho
}M.
\]
Similarly, we have%
\[
\left\vert \left(  T^{2}g\right)  \left(  t\right)  \right\vert =\left\vert
\int\limits_{\alpha}^{t}v_{x}\left(  t,\tau,x_{0}\left(  \tau\right)  \right)
\left(  T^{1}g\right)  \left(  \tau\right)  d\tau\right\vert \leq
\int\limits_{\alpha}^{t}l_{\rho}\left(  \tau-\alpha\right)  l_{\rho}%
Md\tau=\frac{1}{2}\left(  t-\alpha\right)  ^{2}l_{\rho}^{2}M,
\]
and subsequently we get
\[
\left\vert \left(  T^{3}g\right)  \left(  t\right)  \right\vert =\left\vert
\int\limits_{\alpha}^{t}v_{x}\left(  t,\tau,x_{0}\left(  \tau\right)  \right)
\left(  T^{2}g\right)  \left(  \tau\right)  d\tau\right\vert \leq
\int\limits_{\alpha}^{t}l_{\rho}\frac{\left(  \tau-\alpha\right)  ^{2}}%
{2}l_{\rho}^{2}Md\tau=\frac{\left(  t-\alpha\right)  ^{3}}{3!}l_{\rho}^{3}M.
\]
It is enough to proceed by induction on $k$ to obtain the estimate $\left(
\ref{(2.6)}\right)  .$
\end{proof}

We can now formulate the problem to which the rest of this section is
dedicated. For any $t\in\left[  \alpha,\beta\right]  ,$ let us consider the
linear integral equation of the form%
\begin{equation}
h\left(  t\right)  +\int\limits_{\alpha}^{t}v_{x}\left(  t,\tau,x_{0}\left(
\tau\right)  \right)  h\left(  \tau\right)  d\tau=g\left(  t\right)
\label{(2.7)}%
\end{equation}
where $x_{0}\in AC_{0}^{2}$ and $g\in AC_{0}^{2}$ are arbitrarily fixed$.$
Next, we prove the existence and uniqueness results for the above equation.
The lemma to be proved is the following.

\begin{lemma}
\label{lemat2.4}For any $x_{0},g\in AC_{0}^{2},$ the equation $\left(
\ref{(2.7)}\right)  $ possesses a unique solution in $AC_{0}^{2}$ provided
that conditions $\left(  A1\right)  $ and $\left(  A2\right)  $ are
satisfied$.$
\end{lemma}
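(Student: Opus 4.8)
The plan is to read \eqref{(2.7)} as the fixed-point equation $(I+T)h=g$ for the Volterra operator $T$ of \eqref{(2.3)}, and to invert $I+T$ by means of its Neumann series $\sum_{k=0}^{\infty}(-1)^{k}T^{k}g$. The estimate \eqref{(2.6)} of Lemma \ref{lemat 2.3} is precisely what makes this series summable: with $M=\sup_{t}\left\vert g(t)\right\vert$ (finite, since $g\in AC_{0}^{2}\subset C$) one has
\[
\left\vert \left(  T^{k}g\right)  \left(  t\right)  \right\vert \leq\frac{\left(  \beta-\alpha\right)  ^{k}}{k!}l_{\rho}^{k}M,
\]
so the series converges absolutely and uniformly on $\left[  \alpha,\beta\right]$ to a continuous function $h$. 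Since every term with $k\geq1$ vanishes at $t=\alpha$ and $g\left(  \alpha\right)  =0$, the limit satisfies $h\left(  \alpha\right)  =0$. Because $T$ is bounded on $C$ (with norm at most $\left(  \beta-\alpha\right)  l_{\rho}$), applying $T$ term by term and re-indexing gives $Th=\sum_{k\geq1}(-1)^{k-1}T^{k}g$, whence $(I+T)h=T^{0}g=g$; thus $h$ is a continuous solution of \eqref{(2.7)}.

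First I would then upgrade the regularity of this solution from $C$ to $AC_{0}^{2}$. The key point is that $T$ carries any bounded function into $AC_{0}^{2}$. Rewriting \eqref{(2.7)} as $h=g-Th$ and repeating the computation of Lemma \ref{lemat2.1} with the integrand $\tau\mapsto v_{x}\!\left(  t,\tau,x_{0}(\tau)\right)  h(\tau)$ in place of $v$, the local boundedness of $v_{x}$ from $(A2c)$ shows that $Th$ is absolutely continuous, while its derivative
\[
\left(  Th\right)  ^{\prime}\left(  t\right)  =v_{x}\!\left(  t,t,x_{0}(t)\right)  h(t)+\int\limits_{\alpha}^{t}v_{tx}\!\left(  t,\tau,x_{0}(\tau)\right)  h(\tau)\,d\tau
\]
lies in $L^{2}$ by the local boundedness of $v_{x}$ and $v_{tx}$ guaranteed by $(A2c)$ and $(A2d)$, together with the boundedness of the continuous $h$. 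Hence $Th\in AC_{0}^{2}$, and since $g\in AC_{0}^{2}$ we conclude $h=g-Th\in AC_{0}^{2}$. This is exactly the step where the full strength of $(A1)$ and $(A2)$ enters: the conditions $(A1d)$ and $(A2d)$ on the mixed derivative $v_{tx}$, which were irrelevant to the mere convergence of the Neumann series, are what produce the $L^{2}$ bound on $h^{\prime}$.

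Finally, uniqueness follows from the same estimate. If $h_{1},h_{2}\in AC_{0}^{2}$ both solve \eqref{(2.7)}, then $h:=h_{1}-h_{2}$ satisfies $(I+T)h=0$, so $h=-Th$ and, iterating, $h=(-1)^{k}T^{k}h$ for every $k\in\mathbb{N}_{0}$. Taking suprema and invoking \eqref{(2.6)} yields
\[
\sup_{t\in\left[  \alpha,\beta\right]  }\left\vert h(t)\right\vert \leq\frac{\left(  \beta-\alpha\right)  ^{k}}{k!}l_{\rho}^{k}\sup_{t\in\left[  \alpha,\beta\right]  }\left\vert h(t)\right\vert \to0\quad\text{as }k\to\infty,
\]
so $h\equiv0$. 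I expect the main obstacle to be not the convergence of the series—this is the classical successive-approximation argument for a second-kind Volterra equation—but the regularity bootstrap: checking that the uniform limit genuinely belongs to $AC_{0}^{2}$ and establishing the square-integrability of its derivative, which is where assumptions $(A1d)$ and $(A2d)$ are indispensable.
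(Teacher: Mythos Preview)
Your argument is correct and follows the same Neumann-series/successive-approximation scheme as the paper, but with one genuine methodological difference worth recording. The paper works entirely inside $AC_{0}^{2}$: it derives an explicit bound
\[
\left\Vert T^{k}g\right\Vert_{AC_{0}^{2}}\leq D\,\frac{A^{k-1}}{(k-1)!}
\]
by differentiating $T^{k}g$ and invoking the $(A2c)$--$(A2d)$ bounds on $v_{x}$ and $v_{tx}$, so that the partial sums $h_{k}=g+\sum_{i=1}^{k-1}(-1)^{i}T^{i}g$ form a Cauchy sequence directly in the $AC_{0}^{2}$ norm; uniqueness is then obtained from the same $AC_{0}^{2}$ estimate. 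You instead take the sup-norm route: Lemma~\ref{lemat 2.3} already gives uniform convergence in $C$, and the single bootstrap step $h=g-Th$ (together with the computation you outline for $(Th)'$) lifts the continuous limit into $AC_{0}^{2}$. Your approach trades the somewhat technical $AC_{0}^{2}$-norm estimate for a clean regularity argument that isolates exactly where $(A1d)$ and $(A2d)$ enter; the paper's approach has the virtue of never leaving the target space. Both are standard for second-kind Volterra equations, and your uniqueness argument in the sup norm is in fact slightly stronger than the paper's, since it rules out any continuous solution, not just those in $AC_{0}^{2}$.
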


\begin{proof}
The equation $\left(  \ref{(2.7)}\right)  $ can be rewritten in the form%
\begin{equation}
h+Th=g \label{(2.7')}%
\end{equation}
where
\[
\left(  Th\right)  \left(  t\right)  =\int\limits_{\alpha}^{t}v_{x}\left(
t,\tau,x_{0}\left(  \tau\right)  \right)  h\left(  \tau\right)  d\tau.
\]
Consider the following sequence%
\begin{align*}
h_{k+1}  &  =g-Th_{k},\text{ for }k\in\mathbb{N}_{0},\text{ }\\
h_{0}  &  =0.
\end{align*}
It is easy to observe that
\begin{align}
h_{k+1}  &  =g-Th_{k}=g-Tg+T^{2}g-T^{3}g+...+\left(  -1\right)  ^{k}%
T^{k}g\label{(2.9)}\\
&  =g+\sum\limits_{i=1}^{k}\left(  -1\right)  ^{i}T^{i}g,\text{ for }%
k\in\mathbb{N}_{0},\nonumber
\end{align}
where%
\begin{align}
T^{k+1}g  &  =T\left(  T^{k}g\right)  ,\text{ for }k\in\mathbb{N}%
_{0},\label{(2.10)}\\
T^{0}g  &  =g\nonumber
\end{align}
and $T^{k}g$ is given by the formula in $\left(  \ref{(2.4)}\right)  $. By
assumptions $\left(  A1\right)  $, $\left(  A2\right)  $ and from Lemma
\ref{lemat 2.3}, analysis similar to that in the proof of Lemma \ref{lemat2.1}
leads, if we apply induction on $k$, to the fact that $T^{k}g\in AC_{0}^{2}$
for any $g\in AC_{0}^{2}$ and $k\in\mathbb{N}.$ From Lemma \ref{lemat 2.3},
for $k\in\mathbb{N}$, we get%
\begin{align*}
\left\Vert T^{k}g\right\Vert _{AC_{0}^{2}}^{2}  &  =\int\limits_{\alpha
}^{\beta}\left\vert v_{x}\left(  t,t,x_{0}\left(  t\right)  \right)  \left(
T^{k-1}g\right)  \left(  t\right)  +\int\limits_{\alpha}^{t}v_{xt}\left(
t,\tau,x_{0}\left(  \tau\right)  \right)  \left(  T^{k-1}g\right)  \left(
\tau\right)  d\tau\right\vert ^{2}dt\\
&  \leq\int\limits_{\alpha}^{\beta}\left[  l_{\rho}\frac{\left(  \beta
-\alpha\right)  ^{k-1}}{\left(  k-1\right)  !}l_{\rho}^{k-1}M+\left(
\beta-\alpha\right)  l_{\rho}\frac{\left(  \beta-\alpha\right)  ^{k-1}%
}{\left(  k-1\right)  !}l_{\rho}^{k-1}M\right]  ^{2}dt\\
&  =l_{\rho}^{2k}\left[  \frac{\left(  \beta-\alpha\right)  ^{k-1}}{\left(
k-1\right)  !}\right]  ^{2}M^{2}C^{2}%
\end{align*}
where $C^{2}=\left(  \beta-\alpha\right)  \left(  1+\beta-\alpha\right)
^{2}.$ Consequently, for $k\in\mathbb{N}$, we have%
\begin{equation}
\left\Vert T^{k}g\right\Vert _{AC_{0}^{2}}\leq CMl_{\rho}\frac{l_{\rho}%
^{k-1}\left(  \beta-\alpha\right)  ^{k-1}}{\left(  k-1\right)  !}%
=D\frac{A^{k-1}}{\left(  k-1\right)  !} \label{(2.11)}%
\end{equation}
where $D=CMl_{\rho}>0$ and $A=l_{\rho}\left(  \beta-\alpha\right)  >0.$ Since
the series $\sum\limits_{k=1}^{\infty}D\frac{A^{k-1}}{\left(  k-1\right)  !}$
is a convergent bound of the series $\sum\limits_{k=1}^{\infty}\left(
-1\right)  ^{k}T^{k}g$, it follows that the sequence $\left\{  h_{k+1}%
\right\}  $ defined by $\left(  \ref{(2.9)}\right)  $ and $\left(
\ref{(2.10)}\right)  $ is a Cauchy sequence in $AC_{0}^{2}.$ Therefore, the
sequence $\left\{  h_{k}\right\}  $ tends to some $h_{0}\in AC_{0}^{2}.$
Clearly, the operator $T:AC_{0}^{2}$ $\rightarrow AC_{0}^{2}$ defined by
$\left(  \ref{(2.4)}\right)  $ is continuous. Hence, $h_{0}+Th_{0}=g,$ i.e.
$h_{0}$ is a solution to the equation $\left(  \ref{(2.7)}\right)  .$\newline
What is left is to prove that $h_{0}$ is unique. Suppose, on the contrary,
that there exists $h_{1}\in AC_{0}^{2}$ such that $h_{1}\neq h_{0}$ and
satisfies the equation $\left(  \ref{(2.7)}\right)  .$ Denote $\tilde{h}%
=h_{1}-h_{0}.$ From $\left(  \ref{(2.7')}\right)  $ we see that%
\[
\tilde{h}+T\tilde{h}=0.
\]
Moreover,%
\begin{align*}
0  &  =T\tilde{h}+T^{2}\tilde{h}=-\tilde{h}+T^{2}\tilde{h},\\
0  &  =-T\tilde{h}+T^{3}\tilde{h}=\tilde{h}+T^{3}\tilde{h},\\
&  ...,\\
0  &  =\left(  -1\right)  ^{k+1}\tilde{h}+T^{k}\tilde{h}\text{ for }%
k\in\mathbb{N}.
\end{align*}
On making use of the estimate $\left(  \ref{(2.11)}\right)  $ we conclude that
$T^{k}\tilde{h}\rightarrow0$ as $k\rightarrow\infty$ and hence $\tilde
{h}=h_{1}-h_{0}=0.$ Therefore, the equation $\left(  \ref{(2.7)}\right)  $
possesses exactly one solution in $AC_{0}^{2},$ and the proof of the lemma is
complete$.$
\end{proof}

\section{Palais-Smale condition for the functional defined by the Volterra
type operator}

Let us consider the functional $F_{y}:AC_{0}^{2}\rightarrow\mathbb{R}^{+}$ of
the form%
\begin{equation}
F_{y}\left(  x\right)  =\frac{1}{2}\left\Vert V\left(  x\right)  -y\right\Vert
_{AC_{0}^{2}}^{2}=\frac{1}{2}\int\limits_{\alpha}^{\beta}\left\vert \frac
{d}{dt}V\left(  x\right)  \left(  t\right)  -y^{\prime}\left(  t\right)
\right\vert ^{2}dt \label{coercive}%
\end{equation}
where the operator $V$ is defined by $\left(  \ref{0}\right)  $ and $y\in
AC_{0}^{2}$ is a given function.

Differentiating the operator $V$ and substituting the result into $\left(
\ref{coercive}\right)  ,$ we obtain
\begin{equation}
F_{y}\left(  x\right)  =\frac{1}{2}\int\limits_{\alpha}^{\beta}\left\vert
x^{\prime}(t)+v\left(  t,t,x\left(  t\right)  \right)  +\int\limits_{\alpha
}^{t}v_{t}\left(  t,\tau,x\left(  \tau\right)  \right)  d\tau-y^{\prime
}\left(  t\right)  \right\vert ^{2}dt. \label{(3.3)}%
\end{equation}
The task is now to obtain conditions under which, for any $y\in AC_{0}^{2},$
the functional $F_{y}$ satisfies Palais-Smale condition. We recall what this
means. A sequence $\left\{  x_{k}\right\}  _{k\in\mathbb{N}}$ is referred to
as a Palais-Smale sequence for a functional $\varphi$ if for some $M>0$, any
$k\in\mathbb{N},$ $\left\vert \varphi\left(  x_{k}\right)  \right\vert \leq M$
and $\varphi^{\prime}\left(  x_{k}\right)  \rightarrow0$ as $k\rightarrow
\infty.$ We say that $\varphi$ satisfies Palais-Smale condition if any
Palais-Smale sequence possesses a convergent subsequence. To accomplish the
task of guaranteeing Palais-Smale condition, we find ourselves forced to
introduce an extra assumption. First, we consider, for simplicity without loss
of generality, the case of the function $v$ vanishing on the diagonal of the
square $\left[  \alpha,\beta\right]  \times\left[  \alpha,\beta\right]  .$
This case encompasses the problems with the integral operators with kernels
depending on the difference of arguments $t$ and $\tau,$ i.e. $t-\tau,$ with
value $0$ at $0,$ and the space variable $x.$ This kind of operators appear
very often in applications, for a survey of what is known up to date, see, for
example, \cite{Chr, Gri, RenHruNoh} and references therein.

On $v$ we shall impose the following condition:

\begin{enumerate}
\item[(A3)] (a) $v(t,t,x)=0$ for any $t\in\left[  \alpha,\beta\right]  $,
$x\in\mathbb{R}^{n},$\newline$\ \ \ \ \ \ \ $(b) $\left\vert v_{t}\left(
t,\tau,x\right)  \right\vert \leq c_{0}\left(  t,\tau\right)  \left\vert
x\right\vert +d_{0}\left(  t,\tau\right)  $ where $\left(  t,\tau\right)  \in
P_{\Delta},$ $c_{0},d_{0}\in L^{2}\left(  P_{\Delta},\mathbb{R}^{+}\right)  ,$
\newline\ \ \ \ \ \ (c) $\left\Vert c_{0}\right\Vert _{L^{2}\left(  P_{\Delta
},\mathbb{R}^{+}\right)  }<\frac{\sqrt{2}}{2\left(  \beta-\alpha\right)  }.$
\end{enumerate}

It transpires that it is possible to show that, for any $y\in AC_{0}^{2},$
under assumptions imposed in $\left(  A1\right)  ,$ $\left(  A2\right)  ,$
$\left(  A3\right)  $, the functional $F_{y}$ is coercive, i.e. for any $y\in
AC_{0}^{2},$ $F_{y}\left(  x\right)  \rightarrow\infty$ as $\left\Vert
x\right\Vert _{AC_{0}^{2}}\rightarrow\infty.$ Actually we have the following lemma.

\begin{lemma}
\label{lemat3.2}If the function $v$ satisfies $\left(  A1a\right)  ,$ $\left(
A1b\right)  ,$ $\left(  A2a\right)  ,$ $\left(  A2b\right)  $ and $\left(
A3\right)  $, then for any $y\in AC_{0}^{2}$ the functional $F_{y}$ defined by
$\left(  \ref{coercive}\right)  $ is coercive$.$
\end{lemma}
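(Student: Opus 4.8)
The plan is to exploit the vanishing of $v$ on the diagonal to remove the pointwise term $v(t,t,x(t))$ from the integrand in $\left(\ref{(3.3)}\right)$, and then to show that the remaining nonlocal term grows strictly more slowly than $\left\Vert x\right\Vert _{AC_0^2}$, so that a reverse triangle inequality forces $F_y(x)\to\infty$. Concretely, by $(A3a)$ the expression $\left(\ref{(3.3)}\right)$ reduces to
\[
F_y(x)=\tfrac12\int_\alpha^\beta\left| x'(t)+w(t)-y'(t)\right|^2dt,\qquad w(t):=\int_\alpha^t v_t(t,\tau,x(\tau))\,d\tau,
\]
so that $\sqrt{2F_y(x)}=\left\Vert x'+w-y'\right\Vert _{L^2}\ge \left\Vert x\right\Vert _{AC_0^2}-\left\Vert w\right\Vert _{L^2}-\left\Vert y\right\Vert _{AC_0^2}$, using $\left\Vert x'\right\Vert _{L^2}=\left\Vert x\right\Vert _{AC_0^2}$ and $\left\Vert y'\right\Vert _{L^2}=\left\Vert y\right\Vert _{AC_0^2}$.

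The heart of the argument is an estimate of the form $\left\Vert w\right\Vert _{L^2}\le a\left\Vert x\right\Vert _{AC_0^2}+b$ with $a<1$ and $b$ independent of $x$. First I would use $(A3b)$ to bound $|w(t)|\le\int_\alpha^t c_0(t,\tau)|x(\tau)|\,d\tau+\int_\alpha^t d_0(t,\tau)\,d\tau$ and split $\left\Vert w\right\Vert _{L^2}$ accordingly by Minkowski's inequality. For the first piece, the Schwarz inequality in $\tau$ gives $\left(\int_\alpha^t c_0(t,\tau)|x(\tau)|\,d\tau\right)^2\le\left(\int_\alpha^t c_0(t,\tau)^2\,d\tau\right)\int_\alpha^\beta|x(\tau)|^2\,d\tau$; invoking $\left(\ref{((2.1))}\right)$ to replace $\int_\alpha^\beta|x|^2$ by $\tfrac12(\beta-\alpha)^2\left\Vert x\right\Vert _{AC_0^2}^2$ and then integrating in $t$ yields a bound $\tfrac{1}{\sqrt2}(\beta-\alpha)\left\Vert c_0\right\Vert _{L^2(P_\Delta)}\left\Vert x\right\Vert _{AC_0^2}$. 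The second piece is handled the same way and produces a constant $b=(\beta-\alpha)^{1/2}\left\Vert d_0\right\Vert _{L^2(P_\Delta)}$, independent of $x$.

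Combining these gives $a=\tfrac{1}{\sqrt2}(\beta-\alpha)\left\Vert c_0\right\Vert _{L^2(P_\Delta)}$, and here is where $(A3c)$ enters: the smallness condition $\left\Vert c_0\right\Vert _{L^2(P_\Delta)}<\tfrac{\sqrt2}{2(\beta-\alpha)}$ is exactly what forces $a<\tfrac12<1$. Then $\sqrt{2F_y(x)}\ge(1-a)\left\Vert x\right\Vert _{AC_0^2}-b-\left\Vert y\right\Vert _{AC_0^2}$, whose right-hand side tends to $+\infty$ as $\left\Vert x\right\Vert _{AC_0^2}\to\infty$, which gives coercivity. I expect the only delicate point to be the bookkeeping of constants, so that the factor coming out of the Schwarz inequality together with $\left(\ref{((2.1))}\right)$ matches the threshold in $(A3c)$; the structural idea — the diagonal term killed by $(A3a)$, the subcritical linear growth of the memory term controlled by $(A3c)$, and the reverse triangle inequality — is otherwise routine.
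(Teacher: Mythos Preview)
Your proposal is correct and follows essentially the same route as the paper: kill the diagonal term via $(A3a)$, bound $\|w\|_{L^2}$ by $\tfrac{1}{\sqrt2}(\beta-\alpha)\|c_0\|_{L^2(P_\Delta)}\|x\|_{AC_0^2}+(\beta-\alpha)^{1/2}\|d_0\|_{L^2(P_\Delta)}$ using Schwarz and \eqref{((2.1))}, and then invoke $(A3c)$. The only cosmetic difference is that the paper first reduces to $y=0$ and expands the square $\tfrac12|x'+w|^2\ge\tfrac12\|x\|^2-\|x\|\,\|w\|_{L^2}$, whereas you keep $y$ and apply the reverse triangle inequality to $\sqrt{2F_y}$; both conclusions rest on the identical estimate for $\|w\|_{L^2}$.
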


\begin{proof}
Let us observe that the functional $F_{0}$ is bounded from below, since for
any $y\in AC_{0}^{2}$ the functional $F_{y}$ is coercive if and only if the
functional $F_{0}$ is coercive. Indeed, by the Schwarz inequality and the
assumptions of the lemma, we obtain
\begin{align*}
F_{0}\left(  x\right)   &  =\frac{1}{2}\int\limits_{\alpha}^{\beta}\left\vert
x^{\prime}\left(  t\right)  +\int\limits_{\alpha}^{t}v_{t}\left(
t,\tau,x\left(  \tau\right)  \right)  d\tau\right\vert ^{2}dt\\
&  \geq\frac{1}{2}\left\Vert x\right\Vert _{AC_{0}^{2}}^{2}-\int
\limits_{\alpha}^{\beta}\left\vert x^{\prime}\left(  t\right)  \right\vert
\left\vert \int\limits_{\alpha}^{t}v_{t}\left(  t,\tau,x\left(  \tau\right)
\right)  d\tau\right\vert dt\\
&  \geq\left\Vert x\right\Vert _{AC_{0}^{2}}^{2}\left(  \frac{1}{2}-\frac
{1}{\sqrt{2}}\left\Vert c_{0}\right\Vert _{L^{2}\left(  P_{\Delta}%
,\mathbb{R}^{+}\right)  }\left(  \beta-\alpha\right)  \right)  -\left\Vert
x\right\Vert _{AC_{0}^{2}}\left(  \beta-\alpha\right)  ^{\frac{1}{2}%
}\left\Vert d_{0}\right\Vert _{L^{2}\left(  P_{\Delta},\mathbb{R}^{+}\right)
}.
\end{align*}
From $\left(  A3c\right)  $ and the above estimate it follows that
$F_{0}\left(  x\right)  \rightarrow\infty$ if $\left\Vert x\right\Vert
_{AC_{0}^{2}}\rightarrow\infty.$ Consequently, for any $y\in AC_{0}^{2},$
$F_{y}\left(  x\right)  \rightarrow\infty$ if $\left\Vert x\right\Vert
_{AC_{0}^{2}}\rightarrow\infty.$
\end{proof}

Apart from the function $v$ vanishing on the diagonal of the square $\left[
\alpha,\beta\right]  \times\left[  \alpha,\beta\right]  ,$ we shall consider
the case of the function $v$ satisfying the following growth condition:

\begin{enumerate}
\item[(A4)] (a) $\left\vert v\left(  t,t,x\right)  \right\vert \leq
c_{1}\left(  t\right)  \left\vert x\right\vert +d_{1}\left(  t\right)  $ where
$t\in\left[  \alpha,\beta\right]  ,$ $c_{1},d_{1}\in L^{2}\left(  \left[
\alpha,\beta\right]  ,\mathbb{R}^{+}\right)  ,$\newline\ \ \ \ \ \ (b)
$\left\vert v_{t}\left(  t,\tau,x\right)  \right\vert \leq c_{2}\left(
t,\tau\right)  \left\vert x\right\vert +d_{2}\left(  t,\tau\right)  $ where
$\left(  t,\tau\right)  \in P_{\Delta},$ $c_{2},d_{2}\in L^{2}\left(
P_{\Delta},\mathbb{R}^{+}\right)  ,$\newline\ \ \ \ \ \ (c) $\left\Vert
\tilde{c}\right\Vert _{L^{2}\left(  \left[  \alpha,\beta\right]
,\mathbb{R}^{+}\right)  }<\frac{1}{2}$ where $\tilde{c}\left(  t\right)
=\left(  t-\alpha\right)  ^{\frac{1}{2}}c_{1}\left(  t\right)  +\frac{1}%
{\sqrt{2}}\left(  \beta-a\right)  \left(  \int_{\alpha}^{t}c_{2}^{2}\left(
t,\tau\right)  d\tau\right)  ^{\frac{1}{2}}$ for any $t\in\left[  \alpha
,\beta\right]  .$
\end{enumerate}

If in Lemma \ref{lemat3.2} instead of $\left(  A3\right)  $ we assume that
$\left(  A4\right)  $ is fulfilled, then the conclusion of the Lemma
\ref{lemat3.2} still holds. Hence, the lemma to be proved now is the
following.

\begin{lemma}
\label{lemat3.1}If the function $v$ satisfies $\left(  A1a\right)  ,$ $\left(
A1b\right)  ,$ $(A2a),$ $(A2b)$ and $\left(  A4\right)  $, then for any $y\in
AC_{0}^{2}$ the functional $F_{y}$ defined by $\left(  \ref{coercive}\right)
$ is coercive$.$
\end{lemma}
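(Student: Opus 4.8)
The plan is to reproduce, almost verbatim, the argument of Lemma \ref{lemat3.2}; the only new feature is the non-vanishing diagonal term $v(t,t,x(t))$, which is now controlled by $(A4a)$ rather than discarded via $(A3a)$. As in Lemma \ref{lemat3.2}, I would first reduce the claim to the coercivity of $F_0$: since $\|V(x)-y\|_{AC_0^2}\geq\|V(x)\|_{AC_0^2}-\|y\|_{AC_0^2}$ for every fixed $y$, the functional $F_y$ is coercive whenever $F_0$ is. Writing $b(t)=v(t,t,x(t))+\int_\alpha^t v_t(t,\tau,x(\tau))\,d\tau$ and expanding the square in $(\ref{(3.3)})$ with $y=0$, the elementary inequality $|x'+b|^2\geq|x'|^2-2|x'|\,|b|$ together with the Schwarz inequality gives
\[
F_0(x)\geq\tfrac12\|x\|_{AC_0^2}^2-\|x\|_{AC_0^2}\,\|b\|_{L^2}.
\]

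The heart of the proof is then the estimate $\|b\|_{L^2}\leq\|\tilde c\|_{L^2}\,\|x\|_{AC_0^2}+K$ for a constant $K$, with $\tilde c$ precisely the function appearing in $(A4c)$. To obtain it I bound $|b(t)|$ pointwise using $(A4a)$ and $(A4b)$, separating the $x$-dependent part from the free part. For the diagonal term I use $(\ref{((2.1))})$, namely $|x(t)|\leq(t-\alpha)^{1/2}\|x\|_{AC_0^2}$, which produces the summand $(t-\alpha)^{1/2}c_1(t)\|x\|_{AC_0^2}$. For the memory term I apply the Schwarz inequality in $\tau$ and then the second inequality of the preliminary lemma, $\int_\alpha^\beta|x(\tau)|^2\,d\tau\leq\tfrac12(\beta-\alpha)^2\|x\|_{AC_0^2}^2$, to get the summand $\tfrac1{\sqrt2}(\beta-\alpha)\big(\int_\alpha^t c_2^2(t,\tau)\,d\tau\big)^{1/2}\|x\|_{AC_0^2}$. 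Recognizing the sum of these two coefficients as exactly $\tilde c(t)$ and collecting the free parts into $\tilde d(t)=d_1(t)+\int_\alpha^t d_2(t,\tau)\,d\tau$, the Minkowski inequality yields $\|b\|_{L^2}\leq\|\tilde c\|_{L^2}\|x\|_{AC_0^2}+\|\tilde d\|_{L^2}$; here $\tilde d\in L^2$ because $d_1\in L^2$ and, by Schwarz, $\int_\alpha^\beta\big(\int_\alpha^t d_2\,d\tau\big)^2\,dt\leq(\beta-\alpha)\|d_2\|_{L^2(P_\Delta)}^2$.

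Substituting back, I arrive at
\[
F_0(x)\geq\Big(\tfrac12-\|\tilde c\|_{L^2}\Big)\|x\|_{AC_0^2}^2-\|\tilde d\|_{L^2}\,\|x\|_{AC_0^2},
\]
and since $(A4c)$ forces $\tfrac12-\|\tilde c\|_{L^2}>0$, the right-hand side tends to $+\infty$ as $\|x\|_{AC_0^2}\to\infty$; hence $F_0$, and therefore every $F_y$, is coercive. The only genuinely delicate point, as opposed to bookkeeping, is the double application of the Schwarz inequality to the convolution-type memory term: it must be carried out so that the resulting coefficient is \emph{exactly} $\tilde c(t)$, since the coercivity threshold $\tfrac12$ in $(A4c)$ leaves no slack. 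Everything else is the routine combination of the two estimates of the preliminary lemmas, exactly as in Lemma \ref{lemat3.2}.
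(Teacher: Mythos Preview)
Your proposal is correct and follows essentially the same route as the paper: you introduce the combined term $b(t)=\tilde v(t,x(t))$, bound it pointwise via $(A4a)$, $(A4b)$, $(\ref{((2.1))})$ and Schwarz in $\tau$ to obtain $|b(t)|\le \tilde c(t)\|x\|_{AC_0^2}+\tilde d(t)$, and then feed this into the expansion of $F_0$ to reach the same final lower bound $F_0(x)\ge(\tfrac12-\|\tilde c\|_{L^2})\|x\|_{AC_0^2}^2-\|\tilde d\|_{L^2}\|x\|_{AC_0^2}$. The paper's argument is identical in structure and in every estimate; your additional remark that $\tilde d\in L^2$ (via Schwarz on $\int_\alpha^t d_2\,d\tau$) is a point the paper leaves implicit.
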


\begin{proof}
For any $t\in\left[  \alpha,\beta\right]  ,$ let us first examine the
function
\[
\tilde{v}\left(  t,x\left(  t\right)  \right)  =v\left(  t,t,x\left(
t\right)  \right)  +\int\limits_{\alpha}^{t}v_{t}\left(  t,\tau,x\left(
\tau\right)  \right)  d\tau.
\]
By the Schwarz inequality and inequality $\left(  \ref{((2.1))}\right)  ,$ we
get for any $t\in\left[  \alpha,\beta\right]  $%
\begin{align}
\left\vert \tilde{v}\left(  t,x\left(  t\right)  \right)  \right\vert  &
\leq\left\vert v\left(  t,t,x\left(  t\right)  \right)  \right\vert
+\int\limits_{\alpha}^{t}\left\vert v_{t}\left(  t,\tau,x\left(  \tau\right)
\right)  \right\vert d\tau\label{(3.4')}\\
&  \leq\left\Vert x\right\Vert _{AC_{0}^{2}}\left[  c_{1}\left(  t\right)
\left(  t-\alpha\right)  ^{\frac{1}{2}}+\left(  \int\limits_{\alpha}^{t}%
c_{2}^{2}\left(  t,\tau\right)  d\tau\right)  ^{\frac{1}{2}}\frac{1}{\sqrt{2}%
}\left(  \beta-\alpha\right)  \right]  +d_{1}\left(  t\right)  +\int_{\alpha
}^{t}d_{2}\left(  t,\tau\right)  d\tau\nonumber\\
&  =\tilde{c}\left(  t\right)  \left\Vert x\right\Vert _{AC_{0}^{2}}+\tilde
{d}\left(  t\right)  \nonumber
\end{align}
where $\tilde{c}\left(  t\right)  $ is defined in $\left(  A4\right)  $ and
$\tilde{d}\left(  t\right)  =d_{1}\left(  t\right)  +\int_{\alpha}^{t}%
d_{2}\left(  t,\tau\right)  d\tau.$ Proceeding as in the proof of Lemma
\ref{lemat3.2}, let us show that the functional $F_{0}$ is coercive$.$ We see
from $\left(  \ref{(3.4')}\right)  $ that%
\begin{align*}
F_{0}\left(  x\right)   &  =\frac{1}{2}\int\limits_{\alpha}^{\beta}\left\vert
x^{\prime}\left(  t\right)  +\tilde{v}\left(  t,x\left(  t\right)  \right)
\right\vert ^{2}dt\\
&  \geq\frac{1}{2}\left\Vert x\right\Vert _{AC_{0}^{2}}^{2}-\int
\limits_{\alpha}^{\beta}\left\vert x^{\prime}\left(  t\right)  \right\vert
\left\vert \tilde{v}\left(  t,x\left(  t\right)  \right)  \right\vert dt\\
&  \geq\left(  \frac{1}{2}-\left\Vert \tilde{c}\right\Vert _{L^{2}\left(
\left[  \alpha,\beta\right]  ,\mathbb{R}^{+}\right)  }\right)  \left\Vert
x\right\Vert _{AC_{0}^{2}}^{2}-\left\Vert \tilde{d}\right\Vert _{L^{2}\left(
\left[  \alpha,\beta\right]  ,\mathbb{R}^{+}\right)  }\left\Vert x\right\Vert
_{AC_{0}^{2}}.
\end{align*}
Consequently, from $\left(  A4c\right)  $ it follows that $F_{0}\left(
x\right)  \rightarrow\infty$ if $\left\Vert x\right\Vert _{AC_{0}^{2}%
}\rightarrow\infty.$
\end{proof}

In the next lemmas we provide some conditions under which, for any $y\in
AC_{0}^{2},$ the functional $F_{y}$ defined by $\left(  \ref{coercive}\right)
$ satisfies Palais-Smale condition$.$

\begin{lemma}
\label{lemat3.3}Suppose that conditions (A1), (A2) and (A3) hold. Then for any
$y\in AC_{0}^{2}$ the functional $F_{y}$ defined by $\left(  \ref{coercive}%
\right)  $ satisfies Palais-Smale condition.
\end{lemma}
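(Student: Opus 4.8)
The plan is to combine the coercivity already established in Lemma \ref{lemat3.2} with the compactness of the embedding $AC_{0}^{2}\hookrightarrow C$ and the smoothing effect of the integral part of $V$. Equip the Hilbert space $AC_{0}^{2}$ with the inner product $\left\langle x,z\right\rangle _{AC_{0}^{2}}=\int_{\alpha}^{\beta}x^{\prime}(t)\cdot z^{\prime}(t)\,dt$, fix $y\in AC_{0}^{2}$, and let $\left\{  x_{k}\right\}$ be a Palais--Smale sequence, so $F_{y}(x_{k})\leq M$ and $F_{y}^{\prime}(x_{k})\rightarrow0$. First I would deduce boundedness: since $F_{y}$ is coercive by Lemma \ref{lemat3.2} and $\left\{  F_{y}(x_{k})\right\}$ is bounded, the sequence $\left\{  x_{k}\right\}$ is bounded in $AC_{0}^{2}$, and hence, after passing to a subsequence (not relabeled), $x_{k}\rightharpoonup x_{0}$ weakly. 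The bound $\left(  \ref{((2.1))}\right)$ shows $\left|x_{k}(t)-x_{k}(s)\right|\leq\left|t-s\right|^{1/2}\left\Vert x_{k}\right\Vert_{AC_{0}^{2}}$, so the family is uniformly bounded and equicontinuous; by Arzel\`{a}--Ascoli a further subsequence satisfies $x_{k}\rightarrow x_{0}$ uniformly on $\left[  \alpha,\beta\right]$.

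Next I would compute the gradient. By Lemma \ref{lemat2.2}, $V^{\prime}(x)h=h+T_{x}h$, where $T_{x}$ denotes the operator $\left(  \ref{(2.3)}\right)$ built from $x$; therefore $F_{y}^{\prime}(x)h=\left\langle V(x)-y,(I+T_{x})h\right\rangle _{AC_{0}^{2}}$, so that $\nabla F_{y}(x)=(I+T_{x})^{\ast}(V(x)-y)$ and the hypothesis $F_{y}^{\prime}(x_{k})\rightarrow0$ means $\nabla F_{y}(x_{k})\rightarrow0$ strongly. Writing $V(x)=x+V^{0}(x)$ with $V^{0}(x)(t)=\int_{\alpha}^{t}v(t,\tau,x(\tau))\,d\tau$, I would test the gradient against $x_{k}-x_{0}$:
\[
\left\langle \nabla F_{y}(x_{k}),x_{k}-x_{0}\right\rangle _{AC_{0}^{2}}=\left\langle V(x_{k})-y,(x_{k}-x_{0})+T_{x_{k}}(x_{k}-x_{0})\right\rangle _{AC_{0}^{2}}.
\]
The left-hand side tends to $0$, since $\nabla F_{y}(x_{k})\rightarrow0$ strongly and $\left\{  x_{k}-x_{0}\right\}$ is bounded.

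The crucial step is to show that the two ``compact'' pieces vanish. Using $\left(  A3a\right)$ one has $\frac{d}{dt}V^{0}(x)(t)=\int_{\alpha}^{t}v_{t}(t,\tau,x(\tau))\,d\tau$; since $x_{k}\rightarrow x_{0}$ uniformly, $v_{t}$ is continuous by $\left(  A1b\right)$ and dominated via $\left(  A3b\right)$ by $c_{0}(t,\tau)\rho+d_{0}(t,\tau)\in L^{2}(P_{\Delta})$, so the Lebesgue dominated convergence theorem gives $V^{0}(x_{k})\rightarrow V^{0}(x_{0})$ strongly in $AC_{0}^{2}$. The same uniform convergence, with the continuity and local boundedness of $v_{x}$ and $v_{tx}$ from $\left(  A1c\right),\left(  A1d\right),\left(  A2c\right),\left(  A2d\right)$, yields $T_{x_{k}}(x_{k}-x_{0})\rightarrow0$ strongly in $AC_{0}^{2}$: its $t$-derivative equals $v_{x}(t,t,x_{k}(t))(x_{k}-x_{0})(t)+\int_{\alpha}^{t}v_{tx}(t,\tau,x_{k}(\tau))(x_{k}-x_{0})(\tau)\,d\tau$, and both terms tend to $0$ in $L^{2}$. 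Hence $\left\langle V(x_{k})-y,T_{x_{k}}(x_{k}-x_{0})\right\rangle _{AC_{0}^{2}}\rightarrow0$, while $\left\langle V(x_{k})-y,x_{k}-x_{0}\right\rangle _{AC_{0}^{2}}=\left\langle x_{k},x_{k}-x_{0}\right\rangle _{AC_{0}^{2}}+\left\langle V^{0}(x_{k})-y,x_{k}-x_{0}\right\rangle _{AC_{0}^{2}}$, in which the second term tends to $0$ (strong against weakly null) and $\left\langle x_{k},x_{k}-x_{0}\right\rangle _{AC_{0}^{2}}=\left\Vert x_{k}-x_{0}\right\Vert _{AC_{0}^{2}}^{2}+o(1)$ because $x_{k}\rightharpoonup x_{0}$. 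Collecting these, $\left\Vert x_{k}-x_{0}\right\Vert _{AC_{0}^{2}}^{2}\rightarrow0$, which exhibits the required convergent subsequence.

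The principal difficulty, and the step I expect to be the main obstacle, is that the linearized operator $T_{x_{k}}$ itself depends on the point $x_{k}$, so one cannot simply invoke compactness of a single fixed operator acting on the weakly null sequence $x_{k}-x_{0}$. The resolution is to pass through the compact embedding $AC_{0}^{2}\hookrightarrow C$, which upgrades weak convergence to uniform convergence $x_{k}\rightarrow x_{0}$, and then to differentiate the integral expressions explicitly so that the uniform smallness of $x_{k}-x_{0}$ directly controls both $T_{x_{k}}(x_{k}-x_{0})$ and $V^{0}(x_{k})-V^{0}(x_{0})$ in the $AC_{0}^{2}$-norm; assumption $\left(  A3a\right)$ is what removes the diagonal term $v(t,t,x(t))$ and makes these estimates clean.
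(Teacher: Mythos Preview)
Your argument is correct and follows essentially the same route as the paper: coercivity from Lemma~\ref{lemat3.2} yields boundedness, the compact embedding $AC_{0}^{2}\hookrightarrow C$ upgrades weak to uniform convergence, and testing the derivative against $x_{k}-x_{0}$ isolates $\left\Vert x_{k}-x_{0}\right\Vert_{AC_{0}^{2}}^{2}$ plus remainder terms that vanish via dominated convergence. The only cosmetic differences are that the paper first substitutes $z=x-y$ and then evaluates $\langle \tilde{F}^{\prime}(z_{k})-\tilde{F}^{\prime}(z_{0}),\,z_{k}-z_{0}\rangle$, expanding the remainder explicitly into five integral terms $G_{1},\dots,G_{5}$, whereas you work directly with $\langle\nabla F_{y}(x_{k}),\,x_{k}-x_{0}\rangle$ and package the remainders more abstractly as $T_{x_{k}}(x_{k}-x_{0})\to 0$ and $V^{0}(x_{k})\to V^{0}(x_{0})$ strongly; the underlying estimates are identical.
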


\begin{proof}
By $\left(  A3a\right)  ,$ $v(t,t,x)=0$ for any $t\in\left[  \alpha
,\beta\right]  $ and consequently, for any $y\in AC_{0}^{2},$ the functional
$F_{y}$ has the form%
\begin{equation}
F_{y}\left(  x\right)  =\frac{1}{2}\int\limits_{\alpha}^{\beta}\left\vert
x^{\prime}\left(  t\right)  +\int\limits_{\alpha}^{t}v_{t}\left(
t,\tau,x\left(  \tau\right)  \right)  d\tau-y^{\prime}\left(  t\right)
\right\vert ^{2}dt. \label{(3.6)}%
\end{equation}
It is worth noting that the functional $F_{y}$ being a superposition of two
$C^{1}-$mappings is also of the same regularity type. Moreover, substituting
$x=z+y$ into $\left(  \ref{(3.6)}\right)  $ we get%
\begin{equation}
\tilde{F}\left(  z\right)  =F_{y}\left(  z+y\right)  =\frac{1}{2}%
\int\limits_{\alpha}^{\beta}\left\vert z^{\prime}\left(  t\right)
+\int\limits_{\alpha}^{t}v_{t}\left(  t,\tau,z\left(  \tau\right)  +y\left(
\tau\right)  \right)  d\tau\right\vert ^{2}dt. \label{(3.7)}%
\end{equation}
Undeniably, for any $y\in AC_{0}^{2},$ the functional $F_{y}$ satisfies
Palais-Smale condition if and only if $\tilde{F}$ satisfies Palais-Smale
condition. Immediately, from $\left(  \ref{(3.7)}\right)  $ we get%
\begin{equation}
\tilde{F}\left(  z\right)  =\frac{1}{2}\int\limits_{\alpha}^{\beta}\left[
\left\vert z^{\prime}\left(  t\right)  \right\vert ^{2}+2\left\langle
z^{\prime}\left(  t\right)  ,\int\limits_{\alpha}^{t}v_{t}\left(
t,\tau,z\left(  \tau\right)  +y\left(  \tau\right)  \right)  d\tau
\right\rangle +\left\vert \int\limits_{\alpha}^{t}v_{t}\left(  t,\tau,z\left(
\tau\right)  +y\left(  \tau\right)  \right)  d\tau\right\vert ^{2}\right]  dt
\label{(3.8)}%
\end{equation}
and the differential of $\tilde{F}$ at $z\in AC_{0}^{2}$ is given, for any
$h\in AC_{0}^{2},$ by
\begin{align}
\tilde{F}^{\prime}\left(  z\right)  h  &  =\int\limits_{\alpha}^{\beta}\left[
\left\langle z^{\prime}\left(  t\right)  ,h^{\prime}\left(  t\right)
\right\rangle +\left\langle h^{\prime}\left(  t\right)  ,\int\limits_{\alpha
}^{t}v_{t}\left(  t,\tau,z\left(  \tau\right)  +y\left(  \tau\right)  \right)
d\tau\right\rangle \right. \label{(3.9)}\\
&  +\left.  \left\langle z^{\prime}\left(  t\right)  ,\int\limits_{\alpha}%
^{t}v_{tx}\left(  t,\tau,z\left(  \tau\right)  +y\left(  \tau\right)  \right)
h\left(  \tau\right)  d\tau\right\rangle \right. \nonumber\\
&  +\left.  \left\langle \int\limits_{\alpha}^{t}v_{t}\left(  t,\tau,z\left(
\tau\right)  +y\left(  \tau\right)  \right)  d\tau,\int\limits_{\alpha}%
^{t}v_{tx}\left(  t,\tau,z\left(  \tau\right)  +y\left(  \tau\right)  \right)
h\left(  \tau\right)  d\tau\right\rangle \right]  dt.\nonumber
\end{align}
Fix $M\geq0$ and let $\left\{  z_{k}\right\}  _{k\in\mathbf{N}}\subset
AC_{0}^{2}$ be a sequence such that $\left\vert \tilde{F}\left(  z_{k}\right)
\right\vert \leq M$ and $\tilde{F}\left(  z_{k}\right)  \rightarrow0$ as
$k\rightarrow\infty.$ From Lemma \ref{lemat3.2} it follows that $\tilde{F}$ is
coercive and hence the sequence $\left\{  z_{k}\right\}  _{k\in\mathbf{N}}$ is
weakly compact.\ Passing, if necessary to a subsequence, we can assume that
$z_{k}$ tends weakly in $AC_{0}^{2}$ to some $z_{0}\in AC_{0}^{2}.$ It may be
worth reminding that the weak convergence of the sequence $\left\{
z_{k}\right\}  _{k\in\mathbf{N}}$ in $AC_{0}^{2}$ implies the uniform
convergence in $C$ and the weak convergence of derivatives in $L^{2}\ $and
moreover a convergent sequence of derivatives has to be bounded. What is left
is to prove that the sequence $\left\{  z_{k}\right\}  _{k\in\mathbf{N}}$
converges to $z_{0}$ in $AC_{0}^{2}.$ Because of $\left(  \ref{(3.9)}\right)
$, direct calculations lead to the equality%
\begin{equation}
\left\langle \tilde{F}^{\prime}\left(  z_{k}\right)  -\tilde{F}^{\prime
}\left(  z_{0}\right)  ,z_{k}-z_{0}\right\rangle =\left\Vert z_{k}%
-z_{0}\right\Vert _{AC_{0}^{2}}^{2}+\sum\limits_{i=1}^{5}G_{i}\left(
z_{k}\right)  \label{(3.10)}%
\end{equation}
where%
\begin{align*}
G_{1}\left(  z_{k}\right)   &  =\int\limits_{\alpha}^{\beta}\left\langle
z_{k}^{\prime}\left(  t\right)  -z_{0}^{\prime}\left(  t\right)
,\int\limits_{\alpha}^{t}\left[  v_{t}\left(  t,\tau,z_{k}\left(  \tau\right)
+y\left(  \tau\right)  \right)  -v_{t}\left(  t,\tau,z_{0}\left(  \tau\right)
+y\left(  \tau\right)  \right)  \right]  d\tau\right\rangle dt,\\
G_{2}\left(  z_{k}\right)   &  =\int\limits_{\alpha}^{\beta}\left\langle
z_{k}^{\prime}\left(  t\right)  ,\int\limits_{\alpha}^{t}v_{tx}\left(
t,\tau,z_{k}\left(  \tau\right)  +y\left(  \tau\right)  \right)  \left(
z_{k}\left(  \tau\right)  -z_{0}\left(  \tau\right)  \right)  d\tau
\right\rangle dt,\\
G_{3}\left(  z_{k}\right)   &  =\int\limits_{\alpha}^{\beta}\left\langle
\int\limits_{\alpha}^{t}v_{t}\left(  t,\tau,z_{k}\left(  \tau\right)
+y\left(  \tau\right)  \right)  d\tau,\int\limits_{\alpha}^{t}v_{tx}\left(
t,\tau,z_{k}\left(  \tau\right)  +y\left(  \tau\right)  \right)  \left(
z_{k}\left(  \tau\right)  -z_{0}\left(  \tau\right)  \right)  d\tau
\right\rangle dt,\\
G_{4}\left(  z_{k}\right)   &  =-\int\limits_{\alpha}^{\beta}\left\langle
z_{0}^{\prime}\left(  t\right)  ,\int\limits_{\alpha}^{t}v_{tx}\left(
t,\tau,z_{0}\left(  \tau\right)  +y\left(  \tau\right)  \right)  \left(
z_{k}\left(  \tau\right)  -z_{0}\left(  \tau\right)  \right)  d\tau
\right\rangle dt,\\
G_{5}\left(  z_{k}\right)   &  =-\int\limits_{\alpha}^{\beta}\left\langle
\int\limits_{\alpha}^{t}v_{t}\left(  t,\tau,z_{0}\left(  \tau\right)
+y\left(  \tau\right)  \right)  d\tau,\int\limits_{\alpha}^{t}v_{tx}\left(
t,\tau,z_{0}\left(  \tau\right)  +y\left(  \tau\right)  \right)  \left(
z_{k}\left(  \tau\right)  -z_{0}\left(  \tau\right)  \right)  d\tau
\right\rangle dt.
\end{align*}
Since $\tilde{F}^{\prime}\left(  z_{k}\right)  \rightarrow0$ and
$z_{k}\rightharpoonup z_{0}$ weakly in $AC_{0}^{2},$ $\lim_{k\rightarrow
\infty}\left\langle \tilde{F}^{\prime}\left(  z_{k}\right)  -\tilde{F}%
^{\prime}\left(  z_{0}\right)  ,z_{k}-z_{0}\right\rangle =0.$ It is therefore
enough to show that $\lim_{k\rightarrow\infty}G_{i}\left(  z_{k}\right)  =0$
for $i=1,2,...,5.$ It can be easily estimated%
\begin{align*}
\left\vert G_{1}\left(  z_{k}\right)  \right\vert ^{2}  &  \leq\int
\limits_{\alpha}^{\beta}\left\vert z_{k}^{\prime}\left(  t\right)
-z_{0}^{\prime}\left(  t\right)  \right\vert ^{2}dt\\
&  \cdot\int\limits_{\alpha}^{\beta}\left[  \int\limits_{\alpha}^{t}\left\vert
v_{t}\left(  t,\tau,z_{k}\left(  \tau\right)  +y\left(  \tau\right)  \right)
-v_{t}\left(  t,\tau,z_{0}\left(  \tau\right)  +y\left(  \tau\right)  \right)
\right\vert d\tau\right]  ^{2}dt.
\end{align*}
Regarding the above inequality, it is clear that the first factor is bounded
whereas the second one converges to zero, which is an immediate consequence of
the Lebesgue Theorem, and therefore $G_{1}\left(  z_{k}\right)  \rightarrow0$
as $k\rightarrow0.$ A similar argument holds for the other terms, i.e. by the
Schwarz inequality and the uniform convergence of $\left\{  z_{k}\right\}  $
to $z_{0}$ can be demonstrated that $G_{i}\left(  z_{k}\right)  \rightarrow0$
if $k\rightarrow\infty$ for $i=2,3,4,5.$ Consequently, from $\left(
\ref{(3.10)}\right)  $ it follows that $z_{k}\rightarrow z_{0}$ in $AC_{0}%
^{2},$ which completes the proof.
\end{proof}

Furthermore, we have the following lemma.

\begin{lemma}
\label{lemat3.4}If assumptions (A1), (A2) and (A4) hold then for any $y\in
AC_{0}^{2}$ the functional $F_{y}$ defined by $\left(  \ref{coercive}\right)
$ satisfies Palais-Smale condition.
\end{lemma}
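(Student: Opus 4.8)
The plan is to follow the argument of Lemma~\ref{lemat3.3} almost verbatim, the sole new feature being that, in the absence of a counterpart of $(A3a)$, the diagonal term $v(t,t,x(t))$ survives in the functional. First I would keep $F_{y}$ in its full form $\left(\ref{(3.3)}\right)$ and carry out the same affine shift $x=z+y$, which yields $\tilde{F}(z)=F_{y}(z+y)=\frac{1}{2}\int_{\alpha}^{\beta}\left\vert z^{\prime}(t)+\tilde{v}(t,z(t)+y(t))\right\vert^{2}dt$, where $\tilde{v}(t,x(t))=v(t,t,x(t))+\int_{\alpha}^{t}v_{t}(t,\tau,x(\tau))\,d\tau$ is precisely the quantity estimated in $\left(\ref{(3.4')}\right)$. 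Because the shift is an affine isometry of $AC_{0}^{2}$, $F_{y}$ satisfies the Palais-Smale condition if and only if $\tilde{F}$ does, so it suffices to treat $\tilde{F}$.

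Next I would expand $\tilde{F}$ as in $\left(\ref{(3.8)}\right)$ and compute its differential, obtaining an expression analogous to $\left(\ref{(3.9)}\right)$ but carrying the extra contributions $\left\langle h^{\prime}(t),v(t,t,z(t)+y(t))\right\rangle$ and $\left\langle z^{\prime}(t),v_{x}(t,t,z(t)+y(t))h(t)\right\rangle$ together with the corresponding cross term; assumptions $(A1)$ and $(A2)$ ensure that all integrands are well defined and that $\tilde{F}\in C^{1}$. I would then fix a Palais-Smale sequence $\{z_{k}\}$, invoke Lemma~\ref{lemat3.1} for the coercivity of $\tilde{F}$ to deduce boundedness of $\{z_{k}\}$ in $AC_{0}^{2}$, pass to a subsequence with $z_{k}\rightharpoonup z_{0}$ weakly in $AC_{0}^{2}$, and recall that weak convergence in $AC_{0}^{2}$ entails uniform convergence of $\{z_{k}\}$ in $C$ together with boundedness of the derivatives in $L^{2}$.

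The core of the proof is once more the monotonicity-type identity $\left\langle\tilde{F}^{\prime}(z_{k})-\tilde{F}^{\prime}(z_{0}),z_{k}-z_{0}\right\rangle=\left\Vert z_{k}-z_{0}\right\Vert_{AC_{0}^{2}}^{2}+\sum_{i}G_{i}(z_{k})$, obtained by subtracting the two differentials evaluated on $h=z_{k}-z_{0}$: the leading quadratic term comes from the $\left\langle z^{\prime},h^{\prime}\right\rangle$ part exactly as in $\left(\ref{(3.10)}\right)$, while the remainder now consists of the five terms $G_{1},\dots,G_{5}$ of Lemma~\ref{lemat3.3} plus a handful of further terms produced by $v(t,t,\cdot)$ and $v_{x}(t,t,\cdot)$. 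Since $\tilde{F}^{\prime}(z_{k})\to0$ and $z_{k}-z_{0}\rightharpoonup0$, the left-hand side tends to $0$, so everything reduces to showing that each remainder term vanishes in the limit.

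Each remainder term is handled as in Lemma~\ref{lemat3.3}: by the Schwarz inequality one isolates a bounded factor (either $\left\Vert z_{k}^{\prime}-z_{0}^{\prime}\right\Vert_{L^{2}}$, or an integral of $v_{t}$ controlled through $(A4b)$ and $\left(\ref{((2.1))}\right)$) from a factor tending to $0$. For the genuinely new terms the uniform convergence $z_{k}\to z_{0}$ in $C$ is decisive: continuity of $v$ and $v_{x}$ in the last variable, $(A1a)$ and $(A1c)$, gives $v(t,t,z_{k}(t)+y(t))\to v(t,t,z_{0}(t)+y(t))$ and $v_{x}(t,t,z_{k}(t)+y(t))(z_{k}(t)-z_{0}(t))\to0$ uniformly in $t$, and the Lebesgue dominated convergence theorem --- legitimate by the local boundedness in $(A2)$ --- transfers this to the integrals. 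Collecting these limits forces $\left\Vert z_{k}-z_{0}\right\Vert_{AC_{0}^{2}}\to0$, i.e. strong convergence, which is the asserted Palais-Smale condition. I expect the only genuine work to be the bookkeeping of the enlarged family of remainder terms and the verification that each still admits an integrable dominating function; conceptually nothing beyond the machinery of Lemma~\ref{lemat3.3} is required.
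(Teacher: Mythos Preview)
Your proposal is correct and follows essentially the same route as the paper: the paper also performs the shift $x=z+y$, writes the resulting functional as $\tilde{F}$ plus the extra diagonal contributions, computes the differential, and obtains the identity $\langle \bar{F}'(z_k)-\bar{F}'(z_0),z_k-z_0\rangle=\|z_k-z_0\|_{AC_0^2}^2+\sum_{i=1}^{5}G_i(z_k)+\sum_{i=1}^{9}H_i(z_k)$, where the nine additional terms $H_i$ are exactly the ``handful of further terms'' you describe, each dispatched via Schwarz plus uniform convergence and dominated convergence. The only cosmetic difference is that the paper spells out the nine $H_i$ explicitly, whereas you describe them qualitatively; otherwise the arguments coincide.
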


\begin{proof}
The proof proceeds along the same lines as the proof of Lemma \ref{lemat3.3},
this time the details are more intricate. Clearly, the functional $F_{y}$ has
the form%
\begin{equation}
F_{y}\left(  x\right)  =\frac{1}{2}\int\limits_{\alpha}^{\beta}\left\vert
x^{\prime}\left(  t\right)  +v\left(  t,t,x\left(  t\right)  \right)
+\int\limits_{\alpha}^{t}v_{t}\left(  t,\tau,x\left(  \tau\right)  \right)
d\tau-y^{\prime}\left(  t\right)  \right\vert ^{2}dt. \label{nowy}%
\end{equation}
Subsequently, in a similar fashion as in the proof of Lemma \ref{lemat3.3}, we
obtain%
\begin{align*}
\bar{F}\left(  z\right)   &  =F_{y}\left(  z+y\right)  =\frac{1}{2}%
\int\limits_{\alpha}^{\beta}\left\vert z^{\prime}\left(  t\right)  +v\left(
t,t,z\left(  t\right)  +y\left(  t\right)  \right)  +\int\limits_{\alpha}%
^{t}v_{t}\left(  t,\tau,z\left(  \tau\right)  +y\left(  \tau\right)  \right)
d\tau\right\vert ^{2}dt\\
&  =\tilde{F}\left(  z\right)  +\int\limits_{\alpha}^{\beta}\left(
\left\langle z^{\prime}\left(  t\right)  ,v\left(  t,t,z\left(  t\right)
+y\left(  t\right)  \right)  \right\rangle +\frac{1}{2}\left\vert v\left(
t,t,z\left(  t\right)  +y\left(  t\right)  \right)  \right\vert ^{2}\right. \\
&  +\left.  \left\langle v\left(  t,t,z\left(  t\right)  +y\left(  t\right)
\right)  ,\int\limits_{\alpha}^{t}v_{t}\left(  t,\tau,z\left(  \tau\right)
+y\left(  \tau\right)  \right)  d\tau\right\rangle \right)  dt
\end{align*}
where $\tilde{F}$ is defined in $\left(  \ref{(3.8)}\right)  $. Immediately
from the above, for any $h\in AC_{0}^{2}$ we obtain%
\begin{align*}
\bar{F}^{\prime}\left(  z\right)  h  &  =\tilde{F}^{\prime}\left(  z\right)
h+\int\limits_{\alpha}^{\beta}\left(  \left\langle h^{\prime}\left(  t\right)
,v\left(  t,t,z\left(  t\right)  +y\left(  t\right)  \right)  \right\rangle
+\left\langle z^{\prime}\left(  t\right)  ,v_{x}\left(  t,t,z\left(  t\right)
+y\left(  t\right)  \right)  h\left(  t\right)  \right\rangle \right. \\
&  +\left.  \left\langle v\left(  t,t,z\left(  t\right)  +y\left(  t\right)
\right)  ,v_{x}\left(  t,t,z\left(  t\right)  +y\left(  t\right)  \right)
h\left(  t\right)  \right\rangle \right. \\
&  +\left.  \left\langle v_{x}\left(  t,t,z\left(  t\right)  +y\left(
t\right)  \right)  h\left(  t\right)  ,\int\limits_{\alpha}^{t}v_{t}\left(
t,\tau,z\left(  \tau\right)  +y\left(  \tau\right)  \right)  d\tau
\right\rangle \right. \\
&  +\left.  \left\langle v\left(  t,t,z\left(  t\right)  +y\left(  t\right)
\right)  ,\int\limits_{\alpha}^{t}v_{tx}\left(  t,\tau,z\left(  \tau\right)
+y\left(  \tau\right)  \right)  h\left(  \tau\right)  d\tau\right\rangle
\right)  dt
\end{align*}
and
\[
\left\langle \bar{F}^{\prime}\left(  z_{k}\right)  -\bar{F}^{\prime}\left(
z_{0}\right)  ,z_{k}-z_{0}\right\rangle =\left\Vert z_{k}-z_{0}\right\Vert
_{AC_{0}^{2}}^{2}+\sum\limits_{i=1}^{5}G_{i}\left(  z_{k}\right)
+\sum\limits_{i=1}^{9}H_{i}\left(  z_{k}\right)
\]
where%
\begin{align*}
H_{1}\left(  z_{k}\right)   &  =\int\limits_{\alpha}^{\beta}\left\langle
z_{k}^{\prime}\left(  t\right)  -z_{0}^{\prime}\left(  t\right)  ,v\left(
t,t,z_{k}\left(  t\right)  +y\left(  t\right)  \right)  -v\left(
t,t,z_{0}\left(  t\right)  +y\left(  t\right)  \right)  \right\rangle dt,\\
H_{2}\left(  z_{k}\right)   &  =\int\limits_{\alpha}^{\beta}\left\langle
z_{k}^{\prime}\left(  t\right)  ,v_{x}\left(  t,t,z_{k}\left(  t\right)
+y\left(  t\right)  \right)  \left(  z_{k}\left(  t\right)  -z_{0}\left(
t\right)  \right)  \right\rangle dt,\\
H_{3}\left(  z_{k}\right)   &  =\int\limits_{\alpha}^{\beta}\left\langle
v\left(  t,t,z_{k}\left(  t\right)  +y\left(  t\right)  \right)  ,v_{x}\left(
t,t,z_{k}\left(  t\right)  +y\left(  t\right)  \right)  \left(  z_{k}\left(
t\right)  -z_{0}\left(  t\right)  \right)  \right\rangle dt,
\end{align*}%
\begin{align*}
H_{4}\left(  z_{k}\right)   &  =\int\limits_{\alpha}^{\beta}\left\langle
v_{x}\left(  t,t,z_{k}\left(  t\right)  +y\left(  t\right)  \right)  \left(
z_{k}\left(  t\right)  -z_{0}\left(  t\right)  \right)  ,\int\limits_{\alpha
}^{t}v_{t}\left(  t,\tau,z_{k}\left(  \tau\right)  +y\left(  \tau\right)
\right)  d\tau\right\rangle dt,\\
H_{5}\left(  z_{k}\right)   &  =\int\limits_{\alpha}^{\beta}\left\langle
v\left(  t,t,z_{k}\left(  t\right)  +y\left(  t\right)  \right)
,\int\limits_{\alpha}^{t}v_{tx}\left(  t,\tau,z_{k}\left(  \tau\right)
+y\left(  \tau\right)  \right)  \left(  z_{k}\left(  \tau\right)
-z_{0}\left(  \tau\right)  \right)  d\tau\right\rangle dt,\\
H_{6}\left(  z_{k}\right)   &  =-\int\limits_{\alpha}^{\beta}\left\langle
z_{0}^{\prime}\left(  t\right)  ,v_{x}\left(  t,t,z_{0}\left(  t\right)
+y\left(  t\right)  \right)  \left(  z_{k}\left(  t\right)  -z_{0}\left(
t\right)  \right)  \right\rangle dt,\\
H_{7}\left(  z_{k}\right)   &  =-\int\limits_{\alpha}^{\beta}\left\langle
v\left(  t,t,z_{0}\left(  t\right)  +y\left(  t\right)  \right)  ,v_{x}\left(
t,t,z_{0}\left(  t\right)  +y\left(  t\right)  \right)  \left(  z_{k}\left(
t\right)  -z_{0}\left(  t\right)  \right)  \right\rangle dt,\\
H_{8}\left(  z_{k}\right)   &  =-\int\limits_{\alpha}^{\beta}\left\langle
v_{x}\left(  t,t,z_{0}\left(  t\right)  +y\left(  t\right)  \right)  \left(
z_{k}\left(  t\right)  -z_{0}\left(  t\right)  \right)  ,\int\limits_{\alpha
}^{t}v_{t}\left(  t,\tau,z_{0}\left(  \tau\right)  +y\left(  \tau\right)
\right)  d\tau\right\rangle dt,\\
H_{9}\left(  z_{k}\right)   &  =-\int\limits_{\alpha}^{\beta}\left\langle
v\left(  t,t,z_{0}\left(  t\right)  +y\left(  t\right)  \right)
,\int\limits_{\alpha}^{t}v_{tx}\left(  t,\tau,z_{0}\left(  \tau\right)
+y\left(  \tau\right)  \right)  \left(  z_{k}\left(  \tau\right)
-z_{0}\left(  \tau\right)  \right)  d\tau\right\rangle dt.
\end{align*}
The rest of the proof runs as in the proof of Lemma \ref{lemat3.3}.
\end{proof}


\section{Main results}

We begin with the statement of the theorem on a diffeomorphism between Banach
and Hilbert spaces.

\begin{theorem}
\label{Tw4.1}Let $X$ be a real Banach space, $H$ be a real Hilbert space,
$V:X\rightarrow H$ be an operator of $C^{1}$ class. If \newline(a1) for any
$x\in X,$ the equation $V^{\prime}\left(  x\right)  h=g$ possesses a unique
solution for any $g\in H,$\newline(a2) for any $y\in H,$ the functional
$F_{y}\left(  x\right)  =\frac{1}{2}\left\Vert V\left(  x\right)
-y\right\Vert _{H}^{2}$ satisfies Palais-Smale condition$,$ \newline then $V$
is a diffeomorphism$.$
\end{theorem}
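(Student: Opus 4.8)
The plan is to derive three properties of $V$ in turn---that it is a local diffeomorphism, that it is onto, and that it is one-to-one---after which a bijective local $C^1$-diffeomorphism is automatically a global diffeomorphism, the differentiability of $V^{-1}$ being inherited locally from the inverse function theorem. I would organize the argument as a combination of the topological ingredient (a1) with the variational ingredient (a2), exactly as the abstract advertises.

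First I would exploit (a1). For each fixed $x\in X$ the bounded linear map $V'(x):X\to H$ is, by (a1), a bijection; the Banach bounded inverse theorem then makes $V'(x)^{-1}$ continuous, so $V'(x)$ is a topological isomorphism. Since $V$ is of class $C^1$, the inverse function theorem in Banach spaces shows that $V$ is a local $C^1$-diffeomorphism at every point, and in particular each fibre $V^{-1}(y)$ is a discrete (hence locally isolated) subset of $X$. Next, for surjectivity I would fix $y\in H$ and study $F_y(x)=\tfrac12\|V(x)-y\|_H^2\ge 0$. This functional is $C^1$, bounded below, and satisfies the Palais--Smale condition by (a2). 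Applying Ekeland's variational principle to a minimizing sequence yields a Palais--Smale sequence at the level $m:=\inf F_y$; by (a2) a subsequence converges to some $\bar x$ with $F_y(\bar x)=m$ and $F_y'(\bar x)=0$. The decisive computation is $F_y'(\bar x)h=\langle V(\bar x)-y,\,V'(\bar x)h\rangle$: since the range of $V'(\bar x)$ is all of $H$ by (a1), $F_y'(\bar x)=0$ forces $V(\bar x)=y$, whence $m=0$ and $y\in V(X)$. As $y$ was arbitrary, $V$ is onto. This same computation proves more, namely that every critical point of $F_y$ satisfies $V(\cdot)=y$, so all critical values of $F_y$ are equal to $0$.

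The main obstacle, and the step I expect to require the most care, is injectivity, which I would settle variationally. Suppose $V(x_1)=V(x_2)=y$ with $x_1\ne x_2$. Both points are zeros of the nonnegative functional $F_y$, hence global minima at level $0$, and because $V$ is a local diffeomorphism they are isolated critical points. For a $C^1$ functional satisfying the Palais--Smale condition, two distinct isolated global minima at a common level produce mountain-pass geometry, and the mountain pass theorem (in a form that accommodates minima sharing a level, e.g. of Ghoussoub--Preiss or Pucci--Serrin type) supplies a further critical point $x_3$ of mountain-pass type with critical value $c>0$. This contradicts the fact, established above, that every critical value of $F_y$ equals $0$. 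Hence $V^{-1}(y)$ is a singleton and $V$ is injective.

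Finally I would assemble the pieces: $V$ is a bijection that is a local $C^1$-diffeomorphism, so its global inverse coincides near every point with the $C^1$ local inverse furnished by the inverse function theorem, and is therefore itself of class $C^1$ on all of $H$. Consequently $V$ is a diffeomorphism, completing the proof. The conceptual heart is the observation that condition (a1) collapses the entire critical set of $F_y$ onto the single level $0$, which is precisely what turns the mountain pass alternative into a contradiction and thereby converts the variational output into injectivity.
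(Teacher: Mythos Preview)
Your proposal is correct and follows the standard route for this class of global inversion theorems. Note, however, that the paper does not supply its own proof of Theorem~\ref{Tw4.1}: it merely records in the subsequent remark that, via the bounded inverse theorem, (a1) is equivalent to the estimate $\|V'(x)h\|_H\ge\alpha_x\|h\|_X$, and then identifies the statement with Theorem~3.1 of \cite{IdcSkoWal}. Your write-up is therefore a self-contained account of what that reference provides, and the three-step scheme you describe---local diffeomorphism from (a1), surjectivity by minimising $F_y$ via Ekeland together with the observation that $F_y'(\bar x)=0$ forces $V(\bar x)=y$ because $V'(\bar x)$ is onto, and injectivity by a mountain-pass contradiction---is exactly the mechanism behind that result.

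One small remark on the injectivity step: you do not actually need to invoke a refined Ghoussoub--Preiss or Pucci--Serrin version. Because $V'(x_1)$ is an isomorphism, the usual inverse-function-theorem estimate gives, for $\rho$ small, $\|V(x)-V(x_1)\|_H\ge c\,\|x-x_1\|_X$ on $\bar B_\rho(x_1)$, whence $\inf_{\partial B_\rho(x_1)}F_y\ge c^2\rho^2/2>0$. Thus the classical Ambrosetti--Rabinowitz mountain pass theorem already yields a critical value $c>0$, and your contradiction with ``all critical values of $F_y$ equal $0$'' goes through without further machinery.
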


\begin{remark}
From the bounded inverse theorem it follows that for any $x\in X$ there exists
a constant $\alpha_{x}>0$ such that $\left\Vert V^{\prime}\left(  x\right)
h\right\Vert _{H}\geq\alpha_{x}\left\Vert h\right\Vert _{X}$. Therefore, it
can be easily seen that, in notation $F_{y}=\varphi$ and $V=f,$ the above
theorem is equivalent to Theorem 3.1 in \cite{IdcSkoWal}.
\end{remark}

The application of Lemmas \ref{lemat2.4}, \ref{lemat3.3}, \ref{lemat3.4} and
Theorem \ref{Tw4.1} leads to the main conclusion of this paper.

\begin{theorem}
\label{Tw4.2}If the function $v$ satisfies assumptions (A1), (A2) and one of
the assumption either (A3) or (A4) holds, then the integral operator
$V:AC_{0}^{2}\rightarrow AC_{0}^{2}$ defined by $\left(  \ref{0}\right)  $ is
a diffeomorphism$.$
\end{theorem}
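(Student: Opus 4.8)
The plan is to deduce the result directly from the abstract diffeomorphism criterion in Theorem~\ref{Tw4.1}, applied with $X = H = AC_0^2$. The first thing to record is that $AC_0^2$ is in fact a real Hilbert space: its norm is induced by the inner product $\langle x, z\rangle = \int_\alpha^\beta \langle x'(t), z'(t)\rangle\, dt$, and the map $x \mapsto x'$ is an isometric isomorphism onto $L^2$ (the condition $x(\alpha)=0$ recovers $x$ from $x'$), so the Banach/Hilbert hypotheses are met with a single space. It then suffices to verify the three inputs of that theorem: that $V$ is of class $C^1$, that the linearized equation $V'(x)h = g$ is uniquely solvable for every $x$ and every right-hand side $g$, and that each functional $F_y$ satisfies the Palais--Smale condition.

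First I would establish the $C^1$ regularity of $V$. By Lemma~\ref{lemat2.1}, assumptions $(A1a),(A1b),(A2a),(A2b)$ guarantee that $V$ maps $AC_0^2$ into itself; by Lemma~\ref{lemat2.2}, assumptions $(A1a),(A1c),(A2a),(A2c)$ give Fr\'{e}chet differentiability with derivative $\left(\ref{(2.2)}\right)$. What remains --- and this is the step I expect to require the most care, since Lemma~\ref{lemat2.2} yields only differentiability and not continuity of the derivative --- is continuity of the map $x_0 \mapsto V'(x_0)$ from $AC_0^2$ into the space of bounded linear operators. Here the full strength of $(A1)$ and $(A2)$, in particular the hypotheses on $v_{tx}$ in $(A1d),(A2d)$, is needed: differentiating $V'(x_0)h$ in $t$ produces the term $v_x(t,t,x_0(t))h(t) + \int_\alpha^t v_{tx}(t,\tau,x_0(\tau))h(\tau)\,d\tau$, which must lie in $L^2$ and depend continuously on $x_0$. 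I would bound the operator-norm difference $\left\Vert V'(x_0)h - V'(\bar{x}_0)h\right\Vert_{AC_0^2}$ uniformly over $\left\Vert h\right\Vert_{AC_0^2}\le 1$ by combining the Schwarz inequality, the embedding $\left(\ref{((2.1))}\right)$ of $AC_0^2$ into $C$, the local boundedness furnished by $(A2)$, and the Lebesgue dominated convergence theorem applied to the continuous integrands $v_x$ and $v_{tx}$ --- precisely the mechanism already exploited in the proof of Lemma~\ref{lemat2.2}.

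Second, condition (a1) of Theorem~\ref{Tw4.1} is immediate: by $\left(\ref{(2.2)}\right)$ the equation $V'(x_0)h = g$ is exactly the linear Volterra equation $\left(\ref{(2.7)}\right)$ with kernel $v_x(t,\tau,x_0(\tau))$, so Lemma~\ref{lemat2.4} provides a unique solution $h \in AC_0^2$ for every $x_0, g \in AC_0^2$. Third, condition (a2) is supplied by the Palais--Smale lemmas: if $(A3)$ holds we invoke Lemma~\ref{lemat3.3}, while if $(A4)$ holds we invoke Lemma~\ref{lemat3.4}; in either case $F_y$ satisfies the Palais--Smale condition for every $y \in AC_0^2$. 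With all three hypotheses of Theorem~\ref{Tw4.1} verified, that theorem yields at once that $V$ is a diffeomorphism, which completes the argument. The only genuinely nontrivial part is the $C^1$ claim; the remaining two conditions are verbatim restatements of results already proved.
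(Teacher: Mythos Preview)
Your proposal is correct and follows essentially the same route as the paper: apply Theorem~\ref{Tw4.1} with $X=H=AC_0^2$, invoke Lemma~\ref{lemat2.4} for condition (a1), and invoke Lemma~\ref{lemat3.3} or Lemma~\ref{lemat3.4} for condition (a2). The only difference is that you spell out the $C^1$ verification explicitly, whereas the paper treats it as already established (cf.\ the remark in the proof of Lemma~\ref{lemat3.3} that $F_y$ is a superposition of $C^1$ maps).
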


\begin{proof}
Set $X=H=AC_{0}^{2}.$ From Lemma \ref{lemat2.4} we infer that the operator $V$
satisfies the assumption (a1) of Theorem \ref{Tw4.1}, while either Lemma
\ref{lemat3.3} or Lemma \ref{lemat3.4} ascertains that for any $y\in
AC_{0}^{2}$ the functional\ $F_{y}\left(  x\right)  =\frac{1}{2}\left\Vert
V\left(  x\right)  -y\right\Vert _{AC_{0}^{2}}^{2}$ satisfies Palais-Smale
condition, i.e. the assumption (a2) of Theorem \ref{Tw4.1} is fulfilled.
Therefore,\ in result $V:AC_{0}^{2}\rightarrow AC_{0}^{2}$ is a
diffeomorphism$.$
\end{proof}

Theorem \ref{Tw4.2} can be restated as follows.

\begin{theorem}
\label{Tw4.2'}If the function $v$ satisfies the assumptions of Theorem
\ref{Tw4.2}, then for any $a\in AC_{0}^{2}$ the nonlinear integral equation
\[
x\left(  t\right)  +\int\limits_{\alpha}^{t}v\left(  t,\tau,x\left(
\tau\right)  \right)  d\tau=a\left(  t\right)
\]
possesses a unique solution $x=x_{a}\in AC_{0}^{2}.$ The solution $x=x_{a}$
depends continuously on parameter $a$ and moreover the operator $AC_{0}%
^{2}\backepsilon a\rightarrow x_{a}\in AC_{0}^{2}$ is continuously Fr\'{e}chet differentiable.
\end{theorem}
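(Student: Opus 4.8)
The plan is to read the displayed integral equation as the single operator equation $V(x)=a$, with $V:AC_0^2\rightarrow AC_0^2$ the Volterra operator of $\left(\ref{0}\right)$, and then to derive every assertion from the fact, supplied by Theorem \ref{Tw4.2}, that under the stated hypotheses $V$ is a diffeomorphism. Since a diffeomorphism is in particular a bijection of $AC_0^2$ onto itself, for each $a\in AC_0^2$ the equation $V(x)=a$ has exactly one solution, namely $x_a:=V^{-1}(a)$; this settles existence and uniqueness at once.

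Continuous dependence on the parameter is then automatic. Being a diffeomorphism, $V^{-1}$ is Fr\'{e}chet differentiable at every point and hence continuous, so $a\mapsto x_a=V^{-1}(a)$ is continuous. The only remaining point is that $a\mapsto x_a$ is \emph{continuously} Fr\'{e}chet differentiable, which is slightly more than the bare diffeomorphism definition guarantees (that definition yields only differentiability of $V^{-1}$).

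For this I would differentiate the identity $V\!\left(V^{-1}(a)\right)=a$ and invoke the standard inverse-differential formula
\[
\left(V^{-1}\right)'(a)=\left[\,V'\!\left(x_a\right)\,\right]^{-1},
\]
which is legitimate because, by Lemma \ref{lemat2.4} together with the bounded inverse theorem, each differential $V'(x)$ is a linear homeomorphism of $AC_0^2$. The right-hand side is a composition of three continuous maps: $a\mapsto x_a$ (continuous by the previous paragraph), $x\mapsto V'(x)$ (continuous since $V$ is of class $C^1$), and operator inversion $L\mapsto L^{-1}$ (continuous on the open set of invertible bounded linear operators). Consequently $\left(V^{-1}\right)'$ is continuous, i.e. $a\mapsto x_a$ is of class $C^1$.

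The one step needing genuine care, and the step I expect to be the main obstacle, is the upgrade from differentiability to $C^1$ regularity of the inverse, and within it the continuity of $x\mapsto V'(x)$ in the operator norm. I expect this to be mild, however: from the explicit formula $\left(\ref{(2.2)}\right)$ for $V'(x_0)$, the continuity of $v_x$ granted by (A1c), the local bound (A2c), and a dominated-convergence argument of exactly the type already carried out in Lemma \ref{lemat2.2}, one obtains that $x\mapsto V'(x)$ is norm-continuous. With that in hand the chain of continuous maps above closes, and the theorem follows directly from Theorem \ref{Tw4.2}.
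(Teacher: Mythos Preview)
Your proposal is correct and matches the paper's treatment: the paper gives no separate proof of this theorem, presenting it simply as a restatement of Theorem~\ref{Tw4.2}, so your argument is precisely the natural unpacking of what ``$V$ is a diffeomorphism'' entails. One small caveat on the $C^{1}$ step: verifying norm-continuity of $x\mapsto V'(x)$ in the $AC_{0}^{2}$ operator topology requires differentiating $\left(\ref{(2.2)}\right)$ in $t$, which brings in $v_{x}(t,t,\cdot)$ and $v_{tx}$, so you will need (A1d) and (A2d) alongside (A1c) and (A2c), not just the latter pair.
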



\section{Examples}

\begin{example}
As an example of the application of Theorem \ref{Tw4.2}, consider the
following integral operator for any $t\in\left[  0,1\right]  $%
\[
V\left(  x\right)  \left(  t\right)  =x\left(  t\right)  +\bar{a}%
\int\limits_{0}^{t}\left(  t-\tau\right)  ^{\frac{2}{3}}\ln\left(  1+2\left(
t-\tau\right)  ^{2}x^{2}\left(  \tau\right)  \right)  d\tau\text{ }%
\]
with $\bar{a}^{2}<\frac{35}{8}$, i.e the function $v:P_{\Delta}\times
\mathbb{R}\rightarrow\mathbb{R}$ is of the form%
\[
v\left(  t,\tau,x\right)  =\bar{a}\left(  t-\tau\right)  ^{\frac{2}{3}}%
\ln\left(  1+2\left(  t-\tau\right)  ^{2}x^{2}\right)  .
\]
Let us notice%
\[
v_{t}\left(  t,\tau,x\right)  =\frac{2}{3}\bar{a}\left(  t-\tau\right)
^{-\frac{1}{3}}\ln\left(  1+2\left(  t-\tau\right)  ^{2}x^{2}\right)  +\bar
{a}\left(  t-\tau\right)  ^{\frac{2}{3}}\frac{4\left(  t-\tau\right)  x^{2}%
}{1+2\left(  t-\tau\right)  ^{2}x^{2}}.
\]
Since $\ln\left(  1+z^{2}\right)  \leq\left\vert z\right\vert ,$ the following
inequality holds
\[
\left\vert v_{t}\left(  t,\tau,x\right)  \right\vert \leq\frac{2\sqrt{2}}%
{3}\left\vert \bar{a}\right\vert \left(  t-\tau\right)  ^{\frac{2}{3}%
}\left\vert x\right\vert +2\left\vert \bar{a}\right\vert \left(
t-\tau\right)  ^{-\frac{1}{3}}.
\]
Let us put $c_{0}\left(  t,\tau\right)  =\frac{2\sqrt{2}}{3}\left\vert \bar
{a}\right\vert \left(  t-\tau\right)  ^{\frac{2}{3}}$ and $d_{0}\left(
t,\tau\right)  =2\left\vert \bar{a}\right\vert \left(  t-\tau\right)
^{-\frac{1}{3}}.$ In an elementary way can be checked that $c_{0},d_{0}\in
L^{2}\left(  P_{\Delta},\mathbb{R}^{+}\right)  $ and
\[
\left\Vert c_{0}\right\Vert _{L^{2}\left(  P_{\Delta},\mathbb{R}^{+}\right)
}^{2}=\frac{4}{35}\bar{a}^{2}.
\]
Thus $v$ satisfies assumptions $\left(  A1\right)  -\left(  A3\right)  $.
Consequently, Theorem \ref{Tw4.2} implies that, for any $t\in\left[
0,1\right]  $ and $a\in AC_{0}^{2},$ the equation
\[
x\left(  t\right)  +\bar{a}\int\limits_{0}^{t}\left(  t-\tau\right)
^{\frac{2}{3}}\ln\left(  1+2\left(  t-\tau\right)  ^{2}x^{2}\left(
\tau\right)  \right)  d\tau=a\left(  t\right)  \text{ }%
\]
possesses a unique solution $x=x_{a}\in AC_{0}^{2}$ and the operator
$AC_{0}^{2}\backepsilon a\rightarrow x_{a}\in AC_{0}^{2}$ is continuously
Fr\'{e}chet differentiable.
\end{example}

\begin{example}
For any $t\in\left[  0,T\right]  $ with arbitrarily fixed $0<T<\infty,$ the
equation
\[
x\left(  t\right)  +\int\limits_{0}^{t}w\left(  t-\tau\right)  z\left(
x\left(  \tau\right)  \right)  d\tau=a\left(  t\right)  \text{ }%
\]
can be viewed as nonlinear control system with a feedback loop. In this case
the feedback is built up by the multiplication of a nonlinear term $z$ with no
memory and a linear time invariant part with memory $w(t-\tau)$. Subsequently,
the function $a$ is the convolution of the input function and the inverse of
some given transfer. Finally, the function $w$ is the convolution of the
inverse of the feedback transfer function and the inverse of the transfer
function, while $x$ can be interpreted as the output of the system. For
details, see, among others \cite{GriLonSta, MFa, MouMilMic}.

\noindent The nonlinear feedback term $z\in C^{1}(\mathbb{R},\mathbb{R})$ is
required to be such that there exist $A,B>0$ satisfying%
\[
\left\vert z\left(  x\right)  \right\vert \leq A\left\vert x\right\vert +B
\]
for all $x\in\mathbb{R}$. Moreover, we demand that $w\in C^{1}(\left[
0,T\right]  ,\mathbb{R})$ is such that $w\left(  0\right)  =0$ and
\[
\int\limits_{0}^{T}\int\limits_{0}^{t}\left\vert w^{\prime}\left(
t-\tau\right)  \right\vert ^{2}d\tau dt<\frac{1}{2A^{2}T^{2}}.
\]
Then the function $v(t,\tau,x)=w\left(  t-\tau\right)  z\left(  x\right)  $
satisfies assumptions $\left(  A1\right)  -\left(  A3\right)  $. Hence Theorem
\ref{Tw4.2} implies that, for any $t\in\left[  0,T\right]  $ and any $a\in
AC_{0}^{2},$ the equation
\[
x\left(  t\right)  +\int\limits_{0}^{t}w\left(  t-\tau\right)  z\left(
x\left(  \tau\right)  \right)  d\tau=a\left(  t\right)
\]
possesses a unique solution $x=x_{a}\in AC_{0}^{2}$ and the operator
$AC_{0}^{2}\backepsilon a\rightarrow x_{a}\in AC_{0}^{2}$ is continuously
Fr\'{e}chet differentiable.
\end{example}

\section{Summary}

Integral operators and equations involving them are most commonly considered
in the space of square-integrable functions. Under suitable conditions one
usually proves existence and uniqueness theorems. In this paper the integral
operator $V$ was defined on the smaller space of $AC_{0}^{2}$ to obtain
stronger continuity results. We have shown that assumptions (A1), (A2) and
either (A3) or (A4) imply some sufficient condition for the operator
$V:AC_{0}^{2}\rightarrow AC_{0}^{2}$ to be a diffeomorphism, cf. Theorem
\ref{Tw4.2}. It should be underlined that in the proof of Lemma \ref{lemat3.3}
and Lemma \ref{lemat3.4} we have used the compactness of the embedding of the
space $AC_{0}^{2}$ into the space $C,$ not true if we replace $AC_{0}^{2}$
with $L^{2}$. This compact embedding implies that any weakly convergent in
$AC_{0}^{2}$ sequence $\left\{  z_{k}\right\}  $ is uniformly convergent, i.e.
convergent in $C$ in the sup-norm. Apparently in the case of $L^{2}$ space
such an implication does not hold. Therefore, cannot be proved, at least with
the method applied herein, that the operator $V:L^{2}\rightarrow L^{2}$ is a
diffeomorphism, yet it is likely to be a homeomorphism in that case.

\end{document}